\newtheorem{lemma}{Lemma}
\newtheorem{proposition}[lemma]{Proposition}
\newtheorem{theorem}[lemma]{Theorem}
\newtheorem{corollary}[lemma]{Corollary}
\theoremstyle{definition}
\newtheorem{definition}[lemma]{Definition}
\newtheorem{example}[lemma]{Example}
\newtheorem{remark}[lemma]{Remark}
\newcommand{\argument}       {\ignorespaces{\,\cdot\,}\ignorespaces}
\newcommand{\NN}{\mathbbm{N}}
\newcommand{\QQ}{\mathbbm{Q}}
\newcommand{\CC}{\mathbb{C}}
\newcommand{\HH}{\mathbb{H}}
\newcommand{\FF}{\mathbbm{F}}
\newcommand{\EE}{\mathbbm{E}}
\newcommand{\KK}{\mathbbm{K}}
\newcommand{\kk}{\mathbbm{k}}
\newcommand{\A}{\mathbbm{A}}
\newcommand{\B}{\mathbbm{B}}
\newcommand{\Si}{\mathbbm{S}}
\newcommand{\J}{\mathcal{I}}
\newcommand{\V}{\mathcal{V}}
\DeclareMathOperator{\TR}{Tr}
\DeclareMathOperator{\rad}{rad}
\DeclareMathOperator{\ann}{ann}
\DeclareMathOperator{\Hom}{Hom}
\newcommand{\End}{\mathrm{End}}
\newcommand{\maxLId}{\mathfrak{m}}
\newcommand{\naxLId}{\mathfrak{n}}
\newcommand{\maxId}{\mathfrak{n}}
\newcommand{\id}{\mathrm{id}}   %not a math operator because we only use it as an element of the Homs
\newcommand{\Mat}{\mathrm{M}}
\newcommand{\zentrum}{\mathcal{Z}}
\newcommand{\op}{\mathrm{op}}
\title{%
Left Jacobson rings
}
\author{Jakob Cimpri\v c}
\thanks{The first author is affiliated with the Faculty of Math. and Phys., University of Ljubljana, Slovenia, and with IMFM (Institute of Math., Phys. and Mech.)}
\thanks{The second author is affiliated with the Faculty of Math. and Phys., University of Ljubljana, Slovenia.}
\thanks{The first author acknowledges the support from grants P1-0222, J1-50002 and J1-60011 which are funded by ARIS (Slovenian Research and Innovation Agency)}
\author{Matthias Sch\" otz}
\thanks{The second author acknowledges full support from the project COMPUTE (nonCommutative polynOMial oPtimisation for qUanTum nEtworks)
which is funded by the QuantERA II Programme that has received funding from the EU's H2020 research and innovation programme~\euflag~under the GA No 101017733.}
\date{\today}
\begin{document}

\allowdisplaybreaks

\begin{abstract}
	We say that a ring is strongly (resp.\ weakly) left Jacobson if every  semiprime (resp.\ prime) left ideal
	is an intersection of maximal left ideals. There exist Jacobson rings that are not  weakly left Jacobson, e.g.\ the Weyl algebra.
	Our main result is the following one-sided noncommutative Nullstellensatz:
	For any finite-dimensional $\FF$-algebra $\A$ the ring $\A[x_1,\dots,x_n]$
	of polynomials with coefficients in $\A$ is strongly left Jacobson and every maximal left ideal
	of $\A[x_1,\dots,x_n]$ has finite codimension. We also prove that an Azumaya algebra is strongly
	left Jacobson iff its center is Jacobson and that an algebra that is a finitely generated module
	over its center is weakly left Jacobson iff it is Jacobson.
\end{abstract}

\maketitle

\section{Introduction}

\subsection{}
%Hilbert's Nullstellensatz for the $\FF$-algebra  $\A \coloneqq \FF[x_1,\dots,x_n]$ of polynomials
%over a field $\FF$ can be stated as the combination of three results: 
Hilbert's Nullstellensatz for a finitely generated commutative algebra $\A$ over a field $\FF$
is a combination of three results (see e.g.\ \cite{og}):
\renewcommand{\theenumi}{\roman{enumi}}
\begin{enumerate}
 \item \label{hilb1} Every radical ideal of $\A$ is an intersection of prime ideals.
  \item \label{hilb2} Every prime ideal of $\A$ is an intersection of maximal ideals
  (i.e. $\A$ is a Jacobson ring).
  \item \label{hilb3} Every maximal ideal of $\A$ has finite codimension.
\end{enumerate}
Geometrically, it characterizes the vanishing ideals of $\A$: A \textit{point} of $\A$ is 
an algebra homomorphism from $\A$ to a finite field extension of $\FF$.
For every set $X$ of points of $\A$ its \textit{vanishing ideal}
$\bigcap_{\phi \in X} \{a \in \A \mid \phi(a)=0\}$
is a radical ideal of $\A$. Conversely, every radical ideal $I$ of $A$ is the vanishing ideal of the set of all points $\phi$ of $\A$ that satisfy $\phi(I)=0$.

There are various ways to extend these notions to noncommutative $\FF$-algebras, which lead to
different ideas of what type of result would constitute a ``noncommutative Nullstellensatz''.
In particular, should a ``noncommutative Nullstellensatz'' deal with one-sided or two-sided ideals?

\subsection{}
The approach with two-sided ideals is already well-developed; see \cite[Ch.~9]{mcr}. 
We recall it briefly for motivation. It mimics the three steps above. 
By \cite[Thm.~10.11]{lam}, every semiprime ideal of an (associative unital) ring $\A$ is an intersection of prime ideals. This extends \eqref{hilb1}.

Motivated by \eqref{hilb2} we say that a ring $\A$ is \textit{Jacobson} 
if every prime ideal of $\A$ is an intersection of left-primitive ideals.
If $\A$ is also an $\FF$-algebra then left-primitive ideals of $\A$ coincide with 
kernels of irreducible representations of $\A$. 
Examples of Jacobson rings are:  Weyl algebras $A_n(\FF)$, 
 enveloping algebras of finite-dimensional Lie $\FF$-algebras, 
group $\FF$-algebras of polycyclic-by-finite groups; see \cite[Cor.~9.1.8]{mcr}. 
More generally, let $R$ be a commutative Jacobson ring and let $\A$ be a
constructible $R$-algebra (i.e. $\A$ is obtained from $R$ by a finite number of extensions,
each being either an almost normalizing extension or a finite module extension). 
Then $\A$ is  Jacobson by \cite[Thm.~9.4.21]{mcr}. 
Many Ore extensions are Jacobson rings; see \cite[9.7.1(b)]{mcr}.
Another example are finitely generated PI algebras over fields; 
see \cite[Th.~11.7]{formanek}. % or \cite[Cor.~1.3]{amitsur}.

The most difficult part is to find a good analogue of property \eqref{hilb3}.
For finitely generated PI algebras over fields we use the fact that every left-primitive ideal 
is maximal by \cite[Th.~4.3]{formanek} and has finite codimension by \cite[Th.~11.5]{formanek}.
In general we use the fact that every left-primitive ideal  is an intersection 
of maximal left ideals; see \cite[Cor. 11.5]{lam}.
If $\A$ is a countably generated algebra over an uncountable field $\FF$, then every maximal
left ideal $I$ of $\A$ is ``nice'' in the sense that the skew-field $\End_\A(\A/I)$ is algebraic over $\FF$;
see \cite[Def.~9.1.4 and Prop.~9.1.7]{mcr}. A similar theory exists for constructible algebras over
commutative Jacobson rings; see \cite[Def.~9.2.3 and Thm.~9.4.21]{mcr}.

The geometric meaning is similar to the above: Let $\A$ be an $\FF$-algebra.
The \textit{points} of $\A$ are ``nice'' irreducible representations of $\A$ 
(i.e. the endomorphism ring of the corresponding simple
$\A$-module is algebraic over $\FF$).
For every set $X$ of points of $\A$ its \textit{vanishing} ideal
\begin{equation*}
  \bigcap\nolimits_{\pi \in X}~\{ a \in \A \mid \pi(a) = 0 \}
\end{equation*}
is semiprime. Conversely, every semiprime ideal of $\A$ is of this form.

\subsection{}
We are interested in a different approach, dealing only with one-sided ideals.
We say that an $\FF$-algebra  \textit{satisfies the left Nullstellensatz} if it satisfies
the following properties motivated by  \eqref{hilb1}-\eqref{hilb3}:
 \renewcommand{\theenumi}{\alph{enumi}}
\begin{enumerate}
\item \label{left1} Every semiprime left ideal is an intersection of prime left ideals.
\item \label{left2} Every prime left ideal is an intersection of maximal left ideals.
\item \label{left3} Every maximal left ideal has finite codimension.
\end{enumerate}
In Section \ref{sec2} we will explain the geometric meaning of such algebras:
A \textit{nice directional point} of an $\FF$-algebra $\A$ is a pair $(\pi,v)$ where 
$\pi$ is a finite-dimensional irreducible representation of $\A$ and $v$
is an element of the representation space of $\pi$. For every set $X$ of nice directional points
of $\A$ the set 
\begin{equation*}
  \bigcap\nolimits_{(\pi,v) \in X}~\{ a \in \A \mid \pi(a)v = 0 \}
\end{equation*}
is a semiprime left ideal of $\A$. Conversely, if $\A$ satisfies the left Nullstellensatz,
then every semiprime left ideal  is of this form for some $X$.

We say that a ring is \textit{weakly left Jacobson} if it satisfies \eqref{left2}.
Note that every weakly left Jacobson ring is Jacobson.
In Section \ref{sec3} we give examples of  Jacobson rings that are not weakly left Jacobson.
The main examples are the Weyl algebra $A_1(\FF)$
and the universal enveloping algebra of the Heisenberg-Lie algebra.
In Section \ref{sec4} we prove that a Jacobson ring that is finitely generated as a module over its center is weakly left Jacobson, see Corollary~\ref{corollary:Ibot}.

We say that a ring is \textit{strongly left Jacobson} if it satisfies \eqref{left1} and \eqref{left2}.
To the best of our knowledge, it is an open question whether (a) is true for every ring.
Known examples of strongly left Jacobson rings are discussed in Section \ref{sec1}.
%Known examples of strongly left Jacobson rings are the matrix algebras $\mathrm{M}_n(R)$
%where $R$ is a commutative Jacobson ring (see \cite[Thm.~3]{c2})
%and the standard quaternion algebra $\mathbb{H}_R$ over $R$, 
%where $R$ is a commutative Jacobson ring containing $\frac12$
%(see  \cite[Thm.~1.2]{c3} which is based on \cite{ap2}).
In Section \ref{sec5} we show that an Azumaya algebra is strongly left Jacobson if and only if its center is Jacobson,
see Theorem~\ref{theorem:stronglyLeftJacobson}.

In Section \ref{sec6} we prove our main result (Theorem~\ref{theorem:algebraValuedPolynomials})
which says that for any finite-dimensional $\FF$-algebra $\A$ and central variables
$x_1,\ldots,x_n$, the $\FF$-algebra $\A[x_1,\ldots,x_n]$ satisfies the left Nullstellensatz.

\section{A brief survey of left Nulstellens\" atze}
\label{sec1}

\subsection{Quaternionic Nullstellensatz}

A point $a=(a_1,\ldots,a_d) \in \HH^d$ is \textit{central}  iff $a_i a_j=a_j a_i$ for all $i$ and $j$.
Let $\HH[x_1,\ldots,x_d]$ be the ring of all quaternionic polynomials in $d$ central variables.
For every 
\begin{equation}
\label{def1}
f =\sum_{i_1,\ldots,i_d} c_{i_1,\ldots,i_d} x_1^{i_1} \cdots x_d^{i_d} \in \HH[x_1,\ldots,x_d]
\end{equation}
and every central point $a=(a_1,\ldots,a_d) \in \HH^d$, we define the value 
\begin{equation}
\label{def2}
f(a)=\sum_{i_1,\ldots,i_d} c_{i_1,\ldots,i_d} a_1^{i_1} \cdots a_d^{i_d} \in \HH.
\end{equation}

In 2021 G. Alon and E. Paran characterized maximal left ideals of $\HH[x_1,\ldots,x_d]$
as sets of the form $I_a:=\{ g \in \HH[x_1,\ldots,x_d] \mid g(a)=0\}$ where $a \in \HH^d$ is a central point;
see \cite[Theorem 1.1]{ap2}. They also showed that every completely prime left ideal of $\HH[x_1,\ldots,x_d]$
is an intersection of maximal left ideals; see \cite[Proposition 4.4]{ap2}. 

In 2025 J. Cimpri\v c showed that $\HH[x_1,\ldots,x_d]$ is a strong left Jacobson ring.
He also extended this result
from $\HH[x_1,\ldots,x_d]$ to  the standard quaternion algebra $\mathbb{H}_R=\left( -1,-1 \atop R \right)$ 
where $R$ is a commutative Jacobson ring containing $\frac12$; see \cite[Theorems 2.1 and 3.1]{c3}.

In 2024 M. Aryapoor proved an explicit version of Quaternionic Nullstellensatz; see \cite[Theorem 1.5]{aya}.
In 2026 he extended this result (and the characterization of maximal left ideals) from $\HH[x_1,\ldots,x_d]$ to 
$D[x_1,\ldots,x_d]$ where $D$ is a centrally algebraically closed skew-field; see \cite[Theorems 4.5 and 4.7]{cac}.
We do not know whether the strong left Jacobson property extends, too.

Theorem \ref{thmb} summarizes the discussion above for quaternionic polynomials.
We are mostly interested in the generalizations of (b) $\Leftrightarrow$ (c).
To generalize (a) $\Leftrightarrow$ (b) one has to replace the central points
with the directional points; see Section \ref{sec2}. 
The relation between the central points and the directional points for $\HH[x_1,\ldots,x_d]$ 
was discussed in the proof of  \cite[Theorem A.3., $(1) \Rightarrow (2)$]{c3}.

\begin{theorem}
\label{thmb}
Let $P$ be a subset of $\HH[x_1,\ldots,x_d]$.
For every element $q \in \HH[x_1,\ldots,x_d]$ the following are equivalent.
\begin{enumerate}
\item Every central point of $\HH^d$ that annihilates all elements of $P$ also annihilates $q$.
\item $q$ is in the intersection of all maximal left ideals that contain $P$.
\item $q$ is in the smallest semiprime left ideal that contains $P$.
\item For every $c  \in \HH$ there exists $N \in \NN_0$ such that 
$$(cq)^N \in I+I(cq)+\ldots+I(cq)^N.$$
where  $I$ is the left ideal of $\HH[x_1,\ldots,x_d]$ generated by $P$.
\end{enumerate}
\end{theorem}

Some  geometric aspects of Quaternionic Nullstellensatz are discussed in \cite{ap3,ap4,slice,slice2}.

\subsection{Matrix Nullstellensatz}

In 2022 J. Cimpri\v c proved that for every commutative Jacobson ring $R$ and every $n \in \NN$
the ring $M_n(R)$ is strongly left Jacobson; see \cite[Theorem 3]{c2}. 
In 2025 he also proved an explicit version of Matrix Nullstellensatz; see \cite[Theorem 5.1]{c3}.
A characterization of maximal left ideals in $M_n(R)$ was already proved by 
D. R. Stone in 1980; see \cite[Theorem 1.2]{stone}.

Theorem \ref{thmd} summarizes these results for matrix polynomials
over algebraically closed fields:

\begin{theorem}
\label{thmd}
Let $P$ be a subset of $M_n(\kk[x_1,\ldots,x_d])$ where $\kk$ is an algebraically closed field and $n,d \in \NN$.
For every $q  \in M_n(\kk[x_1,\ldots,x_d])$ the following are equivalent:
\begin{enumerate}
\item For every  $a \in \kk^d$ and $v \in \kk^n$ (i.e. a directional point $(a,v)$)
 such that $p(a)v=0$ for every $p \in P$ we have $q(a)v=0$;
\item $q$ is in the intersection of all maximal left ideals that contain $P$.
\item $q$ is in the smallest semiprime left ideal that contains $P$.
\item For every $c \in M_n(\kk)$ there exists $N \in \NN_0$ such that 
$$(cq)^N \in I+I(cq)+\ldots+I(cq)^N.$$
where  $I$ is the left ideal of $M_n(\kk[x_1,\ldots,x_d])$ generated by $P$.
\end{enumerate}
\end{theorem}

\subsection{Bergman's Nullstellensatz}

The original motivation for this paper was the following result:

\begin{theorem} \cite[Theorem 6.3]{bergman} Let $\A$ be the free real algebra in variables $\{x_1,...,x_g\}$. Let $P$ be a finite subset of $\A$ and let $q$ be a given element of $\A$.
The following are equivalent:
\begin{enumerate}
\item For every finite-dimensional representation $\pi \colon \A \to \End(V)$ and for every vector $v \in V$ such that
$\pi(p)v=0$ for every $p \in P$ we have $\pi(q)v=0$.
\item $q$ is in the left ideal of $\A$ generated by $P$.
\end{enumerate}
 In (a) it suffices to consider representations of dimension at most  $\sum_{j=0}^d g^j$ where $d$ is the maximum of the $\deg(q)$ and $\{\deg(p): p \in P\}$.
\end{theorem}
In \cite[Theorem 4.1]{bk} it was observed that this result also holds 
for finitely generated free algebras over any field.

It is still an open question whether the following are  equivalent:
\begin{enumerate}
\item[(a')] For every finite-dimensional irreducible representation $\pi \colon \A \to \End(V)$ 
and for every vector $v \in V$ such that $\pi(p)v=0$ for every $p \in P$ we have $\pi(q)v=0$.
\item[(b')] $q$ is in the smallest semiprime left ideal of $\A$ that contains $P$.
\end{enumerate}

\section{Geometric interpretation}
\label{sec2}

We assume that all rings and algebras are associative and unital. Two-sided ideals will simply be called ideals. 

We will use the definitions of prime and semiprime left ideals from \cite[Sec.~1]{hansen}.
We say that a left ideal $I$ of a ring $\A$ is \emph{prime}
if for every $a,b \in \A$ such that $a\A b \subseteq I$ we have $a \in I$ or $b \in I$.
We say that $I$  is \emph{semiprime} if for every $a \in \A$ such that $a\A a \subseteq I$ we have $a \in I$.
Note that every maximal left ideal is prime, every prime left ideal is semiprime, and any intersection of semiprime left ideals is again semiprime.

Pick a left ideal $I$ of a ring $\A$. We define its \emph{Jacobson radical}
$\rad(I)$ as the intersection of all maximal left ideals that contain $I$.
%For two-sided ideals this coincides with the definition above by \cite[Cor.~11.5]{lam}.
For an algebraic characterization of $\rad(I)$ see
\cite[Expl.~1.4]{superfluous}.
% and \cite[Theorem 2.2]{closures}.
We define the \emph{semiprime radical} $\sqrt[st]{I}$ of $I$ as the smallest semiprime left ideal containing $I$
and the \emph{prime radical} $\sqrt[wk]{I}$ of $I$ as the intersection of all prime left ideals containing $I$.
As every prime left ideal is semiprime, $\sqrt[st]{I} \subseteq \sqrt[wk]{I}$.
We do not know whether $\sqrt[st]{I} = \sqrt[wk]{I}$ in general, but we give sufficient conditions
for this to be true in Corollary~\ref{corollary:Ibot}.
Since every maximal left ideal is prime, we have $\sqrt[wk]{I} \subseteq \rad(I)$ for every left ideal $I$.
Clearly, $\A$ is weakly (resp. strongly) left Jacobson if and only if $\sqrt[wk]{I} = \rad(I)$ (resp. $\sqrt[st]{I} = \rad(I)$) for every left ideal $I$ of $\A$.

Lemma \ref{interpretationmax} gives a geometric interpretation of maximal left ideals:

\begin{lemma} \label{interpretationmax}
  Let $\A$ be a ring, $M$ a simple left $\A$-module and $m$ a nonzero element of $M$.
  Then the annihilator $\ann_M(m)\coloneqq \{r \in \A \mid rm=0\}$ of $m$ is a maximal left ideal of $\A$.
  Moreover, every maximal left ideal of $\A$ is of this form.

  In particular, if $\A$ is an $\FF$-algebra, then maximal left ideals of $\A$ coincide with
  left ideals of the form $J_{\pi,v}\coloneqq \{a \in \A \mid \pi(a)v=0\}$ where $\pi \colon \A \to \End(V_\pi)$
  is an irreducible representation of $\A$ and $v \in V_\pi \setminus \{0\}$.
\end{lemma}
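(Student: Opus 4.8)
The plan is to prove the two directions of the ``moreover'' claim separately, and then deduce the $\FF$-algebra statement as an immediate special case by the dictionary between modules and representations explained before Proposition~\ref{interpretation1}. For the first direction, let $M$ be a simple left $R$-module and $m \in M \setminus \{0\}$. I would consider the $R$-module homomorphism $\varphi \colon R \to M$ given by $\varphi(r) = rm$. Its kernel is exactly $\ann_M(m)$, so $R / \ann_M(m) \cong \varphi(R)$ as left $R$-modules. Since $M$ is simple, the submodule $\varphi(R) = Rm$ is either $\{0\}$ or all of $M$; as $m = 1 \cdot m \neq 0$ (using that $R$ is unital), we get $\varphi(R) = M$. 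Hence $R / \ann_M(m) \cong M$ is simple, which means precisely that $\ann_M(m)$ is a maximal left ideal of $R$.

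For the converse, let $\maxLId$ be a maximal left ideal of $R$. Then $M \coloneqq R/\maxLId$ is a nonzero left $R$-module whose only submodules are $\{0\}$ and $M$ (these correspond to left ideals of $R$ between $\maxLId$ and $R$), so $M$ is simple. Take $m \coloneqq 1 + \maxLId \in M$; since $1 \notin \maxLId$ (as $\maxLId \neq R$), we have $m \neq 0$. Finally $r \in \ann_M(m)$ iff $rm = r + \maxLId = 0$ in $M$ iff $r \in \maxLId$, so $\maxLId = \ann_M(m)$, exhibiting $\maxLId$ in the desired form.

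For the $\FF$-algebra addendum, recall that a left $R$-module $M$ corresponds to the representation $\pi_M \colon R \to \End(M)$, $\pi_M(a)(x) = ax$, and that $M$ is simple iff $\pi_M$ is irreducible; conversely every irreducible representation $\pi \colon R \to \End(V_\pi)$ arises this way from the simple module $V_\pi$. Under this correspondence, for $v \in V_\pi \setminus\{0\}$ one has $J_{\pi,v} = \{a \in R \mid \pi(a)v = 0\} = \ann_{V_\pi}(v)$, so the sets $J_{\pi,v}$ are literally the sets $\ann_M(m)$ with $M$ simple and $m \neq 0$. Thus the first part of the lemma translates verbatim into the statement that the maximal left ideals of $R$ are exactly the $J_{\pi,v}$.

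The argument is routine, so there is no real obstacle; the only point requiring a little care is the standard correspondence between submodules of $R/\maxLId$ and left ideals of $R$ containing $\maxLId$, which is what turns maximality of $\maxLId$ into simplicity of the quotient module and vice versa.
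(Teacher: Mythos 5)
Your proof is correct and follows essentially the same route as the paper: the converse direction (taking $M = R/\maxLId$ and $m = 1+\maxLId$) is identical, and your forward direction rests on the same key fact $Rm = M$ from simplicity, merely packaged through the homomorphism $r \mapsto rm$ and the correspondence between submodules of $R/\ann_M(m)$ and left ideals containing $\ann_M(m)$, where the paper instead argues directly that $1 - rc \in \ann_M(m)$ for some $r$ whenever $c \notin \ann_M(m)$. Both arguments are complete, and your handling of the $\FF$-algebra addendum matches the paper's module--representation dictionary.
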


\begin{proof}
  The annihilator $\ann_M(m)$ is clearly a left ideal of $\A$.
  To show that it is maximal we have to prove that for every $c \in \A \setminus \ann_M(m)$,
  $\ann_M(m)+\A c=\A$. If $c \in \A \setminus \ann_M(m)$, then $cm \ne 0$ which implies
  that $\A cm=M$ since $M$ is simple. In particular, there exist $r \in \A$ such that $m=rcm$;
  i.e.\ $1-rc \in \ann_M(m)$.

  To prove the converse pick a maximal left ideal $I$ of $\A$ and note
  that $M\coloneqq \A/I$ is a simple left $\A$-module containing $m\coloneqq 1+I$.
  Clearly, $\ann_M(m)=I$.
\end{proof}

Let us define a \textit{directional point} of $\A$ as a pair $(M,m)$ where $M$ is a simple left $\A$-module
and $m \in M \setminus \{0\}$ (or in the case of $\FF$-algebras as a pair $(\pi,v)$ where
$\pi \colon \A \to \End(V_\pi)$ is an irreducible representation of $\A$ and $v \in V_\pi \setminus \{0\}$).
We will consider the elements of $\A$ as ``polynomials''.
We say that  a ``polynomial'' $a \in \A$ \textit{annihilates} a directional point $(M,m)$ (resp. $(\pi,v)$) 
if $am=0$ (resp. $\pi(a)v=0$).

For every set $X$ of directional points of $\A$ we write $\J(X)$ for 
the set of all ``polynomials'' from $\A$ that annihilate all elements of $X$.
If $X$ is a singleton  then $\J(X)$ is a maximal left ideal by Lemma \ref{interpretationmax}.
It follows that for every set of directional points $X$, 
the set $\J(X)=\bigcap_{x\in X} \J(x)$ is a semiprime left ideal.

For every set of ``polynomials''  $G \subseteq \A$ we write  $\V(G)$ for the set 
of all directional points  of $\A$ that are annihilated by all elements of $G$.
The set
$\J(\V(G))=\bigcap_{x \in \V(G)} \J(x)=\bigcap \{ \J(x) \mid x \text{ such that } G \subseteq \J(x) \}$
is %by Lemma \ref{interpretationmax}
equal to the intersection of all maximal left ideals of $\A$ that contain $G$.
Therefore, we  get the following geometric interpretation of the 
strong left Jacobson property (analogously for the weak version):

\begin{proposition} \label{interpretation2}
  A ring (or $\FF$-algebra) $\A$ is strongly left Jacobson if and only if for every set of ``polynomials'' 
  $G \subseteq \A$   the set $\J(\V(G))$ is equal to the smallest semiprime left ideal of $\A$ that contains $G$.
\end{proposition}

Let $\A$ be an $\FF$-algebra.
We would prefer to consider only those directional points $(\pi,v)$ of $\A$
which are ``nice'' in the sense that $\pi$ is finite-dimensional.
For every set of  ``polynomials'' $G \subseteq \A$ we write $\V_{\rm nice}(G)$
for the set of all nice directional points of $\A$ that are annihilated by all elements of $G$.

The set $\J(\V_{\rm nice}(G))$ is equal to the intersection of all maximal left ideals
of finite codimension that contain $G$. Namely, note that by the proof of Lemma \ref{interpretationmax},
maximal left ideals of finite codimension coincide with the left ideals
of the form $J_{\pi,v}$ where $\pi$ is a finite-dimensional irreducible 
representation and $v \in V_\pi \setminus \{0\}$.

Recall that by definition an $\FF$-algebra $\A$ satisfies the left Nullstellensatz
if it is strongly left Jacobson and every maximal left ideal has finite codimension.
By the above, we have the following geometric  interpretation:

\begin{proposition} \label{interpretation3}
  An $\FF$-algebra $\A$ satisfies the left Nullstellensatz if and only if for every set of ``polynomials''
  $G \subseteq \A$ the set $\J(\V_{\rm nice}(G))$ is equal to the smallest semiprime left ideal of $\A$
  that contains $G$.
\end{proposition}

\section{Counterexamples}
\label{sec3}

The aim of this section is to give examples of $\FF$-algebras that satisfy the Jacobson property but do not satisfy the weak left Jacobson property.

\begin{theorem} \label{weyl1}
  For any field $\FF$ of characteristic $0$, the first Weyl algebra
  \begin{equation}
    A_1(\FF)=\FF\langle x,y \rangle/(yx-xy-1)
  \end{equation}
  is not weakly left Jacobson.
\end{theorem}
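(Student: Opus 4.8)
The plan is to produce a single prime left ideal $I$ of $A_1(\FF)$ with $\rad(I)\neq I$. This already suffices, because every prime left ideal $P$ satisfies $\sqrt[wk]{P}=P$; and more is true here: since $\mathrm{char}\,\FF=0$, the algebra $A_1(\FF)$ is simple, and in any simple ring $R$ \emph{every} proper left ideal $I$ is prime. Indeed, if $a,b\in R$ with $aRb\subseteq I$ and $b\notin I$, then $a$ annihilates the nonzero cyclic submodule $R(b+I)$ of $R/I$, so $a$ lies in the annihilator of a nonzero module; this annihilator is a two-sided ideal, hence $\{0\}$ by simplicity, so $a=0\in I$. Therefore $\sqrt[wk]{I}=I$ for every left ideal $I$ of $A_1(\FF)$, and it is enough to exhibit one left ideal with $\rad(I)\supsetneq I$ --- equivalently, one cyclic $A_1(\FF)$-module whose module-theoretic Jacobson radical is nonzero.

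The module I would use is $M:=\FF[x,x^{-1}]$, the Laurent polynomials, with $x$ acting by multiplication and $y$ by $\partial:=\mathrm{d}/\mathrm{d}x$. The key point is to determine all of its submodules, and I claim they are exactly $\{0\}\subsetneq\FF[x]\subsetneq M$. The ingredients are: (i) $\FF[x]$ is a simple submodule, since applying $\partial$ repeatedly to a nonzero polynomial produces a nonzero constant, after which the powers of $x$ generate all of $\FF[x]$ (this uses $\mathrm{char}\,\FF=0$); (ii) the quotient $M/\FF[x]$ is simple, by the analogous argument, using that $\partial$ sends $\overline{x^{-k}}$ to a nonzero multiple of $\overline{x^{-k-1}}$ and that multiplication by $x$ raises the exponent; (iii) any submodule $N\not\subseteq\FF[x]$ meets $\FF[x]$ nontrivially --- multiply an element of $N$ that has a pole by a high enough power of $x$ --- hence contains the simple module $\FF[x]$, and since its image in the simple quotient $M/\FF[x]$ is then nonzero, $N=M$. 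Consequently $\FF[x]$ is the \emph{unique} maximal submodule of $M$, so the module radical of $M$ equals $\FF[x]\neq\{0\}$.

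It remains to package this as a statement about a left ideal. The module $M$ is cyclic, generated by $x^{-1}$: already $x\cdot x^{-1}=1$ generates $\FF[x]$, while the image of $x^{-1}$ generates the simple quotient $M/\FF[x]$. Hence $M\cong A_1(\FF)/I$ with $I:=\ann_{A_1(\FF)}(x^{-1})$; a length count even identifies $I$ with the principal left ideal $A_1(\FF)\,yx$. Under the inclusion-preserving correspondence between submodules of $M$ and left ideals of $A_1(\FF)$ containing $I$, the unique maximal submodule $\FF[x]$ corresponds to the unique maximal left ideal over $I$, so $\rad(I)=\{a\in A_1(\FF)\mid a\cdot x^{-1}\in\FF[x]\}$. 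This strictly contains $I$: for example $x\in\rad(I)$ since $x\cdot x^{-1}=1\in\FF[x]$, whereas $x\notin I$ since $x\cdot x^{-1}=1\neq 0$. Thus $I$ is a prime left ideal with $\rad(I)\supsetneq I$, and $A_1(\FF)$ is not weakly left Jacobson.

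I expect the main obstacle to be step (iii), namely showing that $\FF[x]$ is truly the only proper nonzero submodule of $\FF[x,x^{-1}]$ and hence its only maximal submodule; the reductions in the first and third paragraphs are essentially bookkeeping. It is worth stressing that characteristic $0$ is indispensable: in positive characteristic $\FF[x]$ is no longer a simple $A_1(\FF)$-module and $A_1(\FF)$ is not even simple, so the argument --- and possibly the conclusion --- would have to change.
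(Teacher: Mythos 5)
Your proof is correct, and it reaches the conclusion by a genuinely different route than the paper. The paper makes the same first reduction (simplicity of $A_1(\FF)$ in characteristic $0$ forces every proper left ideal to be prime), but then it works entirely inside the algebra: it takes $I=A_1(\FF)\,yx$ and shows by an explicit induction --- repeatedly commuting with $y$, which differentiates the polynomial coefficients of a hypothetical $r$ with $1+rx\in\maxLId$, followed by a descent on powers of $y$ that uses characteristic $0$ --- that any maximal left ideal containing $yx$ but not $x$ would contain $1$, whence $x\in\rad(I)\setminus I$. You instead realize the cyclic module $A_1(\FF)/\ann(x^{-1})\cong\FF[x,x^{-1}]$ and classify all of its submodules, finding the unique maximal submodule $\FF[x]$ and hence a nonzero module radical; this is correct and somewhat more conceptual, and it yields extra information (an explicit description of $\rad(I)$ as the single maximal left ideal $\{a\mid a\cdot x^{-1}\in\FF[x]\}$), at the price of the submodule classification in your step (iii), which you carry out correctly. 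The two arguments in fact concern the same ideal: $yx$ kills $x^{-1}$, so $A_1(\FF)\,yx\subseteq\ann(x^{-1})$, and equality does hold; your ``length count'' for this is the only thin spot, since one must know that $A_1(\FF)/A_1(\FF)\,yx$ has length at most $2$, e.g.\ via the chain $A_1(\FF)\,yx\subseteq A_1(\FF)\,x\subseteq A_1(\FF)$ whose subquotients are isomorphic to $A_1(\FF)/A_1(\FF)\,y\cong\FF[x]$ and to $A_1(\FF)/A_1(\FF)\,x$, both simple in characteristic $0$. But since you never actually use the identification $I=A_1(\FF)\,yx$ (properness of $\ann(x^{-1})$ already makes it prime by your first paragraph), this omission is harmless for the theorem: what the paper buys with its choice of $I$ is a short, purely element-wise argument needing no module classification, while your approach buys a transparent picture of exactly which maximal left ideals lie over $I$.
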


\begin{proof}
  We show first that every left ideal $I$ of $\A $ is prime, so $I=\sqrt[wk]{I}$.
  Namely, if $a,b\in \A$ fulfil $a\A b \subseteq I$, then also $(\sum \A a\A )b \subseteq I$. Since $\A $ is simple
  we have either $\sum \A a\A =0$ or $\sum \A a\A =\A $. In the first case $a=0 \in I$, and
  in the second case $1 \in \sum \A a\A $ implies $b \in I$.

  We now take the left ideal $I\coloneqq \A yx$. Clearly $x \notin I$.
  To show that $I \ne \rad(I)$ it suffices to show that $x \in \maxLId$ for every maximal left ideal $\maxLId$ of $\A$ with $yx \in \maxLId$.
  So assume to the contrary that $\maxLId$ is a maximal left ideal of $\A$ and that $yx \in \maxLId$ but $x\notin \maxLId$.
  By maximality of $\maxLId$ there exists $r\in \A$ such that $1+rx \in \maxLId$. We can expand $r$ as
  $r=\sum_{i=0}^\ell a_i(x) y^i \in \A $ with suitable $\ell\in\NN_0$ and $a_i(x) \in \FF[x]$ for $i\in \{0,\dots,\ell\}$.
  Note that $y x^k-x^k y=k x^{k-1}$ for every $k \in \NN$, and $y x^0 - x^0 y = 0$.
  Write $r^{(1)} \coloneqq yr-ry=\sum_{i=0}^\ell a_i^{(1)}(x) y^i$ where $a_i^{(1)}(x)$ is the first derivative of $a_i(x)$.
  Using analogous notation for arbitrary derivatives, we claim that $y^n +r^{(n)} x \in \maxLId$ for every $n \in \NN_0$. The case $n=0$ follows from the definition of $r$.
  Suppose now that the claim is true for some $n\in \NN_0$, then $y^{n+1}+r^{(n+1)} x=y^{n+1}+(y r^{(n)}-r^{(n)} y) x
  =y(y^n+r^{(n)} x)-r^{(n)} y x \in \maxLId$ since $\maxLId$ is a left ideal containing $yx$; so the claim is also true for $n+1$.
  If $n$ is larger than the maximum of degrees of $a_i(x)$, the claim implies that $y^n \in \maxLId$.

  So we know that there exists $n\in \NN_0$ such that $y^n \in \maxLId$. We now show that in fact $y^n \in \maxLId$ for all $n\in \NN_0$.
  For this it remains to show that for any $k \in \NN$ such that $y^k \in \maxLId$ we also have $y^{k-1} \in \maxLId$.
  Namely, $k y^{k-1}=y^k x-x y^k=y^{k-1} yx - x y^k \in \maxLId$ since $yx \in \maxLId$ and $y^k \in \maxLId$;
  as $\FF$ has characteristic $0$ this shows that indeed $y^{k-1} \in \maxLId$.
  In particular $1 \in \maxLId$, so we have obtained a contradiction.
\end{proof}

To deduce other counterexamples we need the following:

\begin{lemma} \label{lemma:homomorphicimage}
 Let $\pi \colon \A \to \B$ be a surjective ring homomorphism. The image and preimage under $\pi$
 give a bijective correspondence between the left ideals of $\B$ and the left ideals of $\A$ 
 that contain $\ker \pi$. It also preserves semiprime, prime and maximal left ideals.
 In particular if $\A$ is weakly or strongly left Jacobson, then so is $\B$.
\end{lemma}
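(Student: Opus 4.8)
The plan is to first set up the order-theoretic correspondence, then check that each of the three classes of left ideals is preserved, and finally deduce the statement about left Jacobson rings as a formal consequence. I would begin by recording the elementary facts: for a surjective ring homomorphism $\pi\colon\A\to\B$, the preimage of a left ideal of $\B$ is a left ideal of $\A$ containing $\ker\pi$, the image of a left ideal of $\A$ is a left ideal of $\B$, both operations preserve inclusions, and one has $\pi(\pi^{-1}(J))=J$ for every subset $J\subseteq\B$ by surjectivity, while $\pi^{-1}(\pi(I))=I+\ker\pi=I$ for every left ideal $I\supseteq\ker\pi$. Hence $I\mapsto\pi(I)$ and $J\mapsto\pi^{-1}(J)$ are mutually inverse, inclusion-preserving bijections between the set of left ideals of $\A$ that contain $\ker\pi$ and the set of all left ideals of $\B$.

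Next I would verify preservation of the three properties. Maximality is immediate from this order isomorphism, once one notes that a left ideal $I\supseteq\ker\pi$ is maximal among all left ideals of $\A$ if and only if it is maximal among those containing $\ker\pi$, since any left ideal strictly between $I$ and $\A$ automatically contains $\ker\pi$. For the prime and semiprime properties the point is the identity $\pi(a\A b)=\pi(a)\,\B\,\pi(b)$, together with its specialization $\pi(a\A a)=\pi(a)\,\B\,\pi(a)$, for $a,b\in\A$, which again uses surjectivity of $\pi$. Granting this: if $J$ is prime (resp.\ semiprime) in $\B$ and $a\A b\subseteq\pi^{-1}(J)$ (resp.\ $a\A a\subseteq\pi^{-1}(J)$), then applying $\pi$ gives $\pi(a)\B\pi(b)\subseteq J$ (resp.\ $\pi(a)\B\pi(a)\subseteq J$), whence $\pi(a)\in J$ or $\pi(b)\in J$ (resp.\ $\pi(a)\in J$), i.e.\ $a\in\pi^{-1}(J)$ or $b\in\pi^{-1}(J)$; so $\pi^{-1}(J)$ is prime (resp.\ semiprime). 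Conversely, if $\pi^{-1}(J)$ is prime (resp.\ semiprime) and $c\B d\subseteq J$ (resp.\ $c\B c\subseteq J$), I would choose $a,b\in\A$ with $\pi(a)=c$ and $\pi(b)=d$, so that $\pi(a\A b)=c\B d\subseteq J$, hence $a\A b\subseteq\pi^{-1}(J)$, and the defining implication transfers back to show that $J$ is prime (resp.\ semiprime). Thus $\pi$ and $\pi^{-1}$ carry prime to prime and semiprime to semiprime left ideals.

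For the last part, suppose $\A$ is weakly (resp.\ strongly) left Jacobson and let $J$ be a prime (resp.\ semiprime) left ideal of $\B$; set $I\coloneqq\pi^{-1}(J)$, a prime (resp.\ semiprime) left ideal of $\A$ containing $\ker\pi$. Every maximal left ideal of $\A$ containing $I$ then contains $\ker\pi$, so by the correspondence these are exactly the preimages of the maximal left ideals of $\B$ containing $\pi(I)=J$; since preimages commute with arbitrary intersections, this yields $\pi^{-1}(\rad_\B(J))=\rad_\A(I)$. By hypothesis $\rad_\A(I)=I=\pi^{-1}(J)$, and applying $\pi$, using $\pi\circ\pi^{-1}=\id$ on subsets of $\B$, gives $\rad_\B(J)=J$. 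Hence $\B$ is weakly (resp.\ strongly) left Jacobson.

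None of the steps is genuinely deep; the only places requiring care — and the nearest thing to an obstacle — are bookkeeping points. First, $\pi$ does not commute with arbitrary intersections in general, which is why the final computation is phrased through $\pi^{-1}$ and restricted to ideal families lying above $\ker\pi$. Second, a maximal left ideal of $\A$ need not contain $\ker\pi$, so the correspondence applies to it only after one observes that the maximal left ideals relevant to $\rad_\A(I)$ do contain $\ker\pi$, precisely because $I\supseteq\ker\pi$.
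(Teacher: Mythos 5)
Your proposal is correct and follows essentially the same route as the paper's proof: the inclusion-preserving bijection between left ideals of $\B$ and left ideals of $\A$ containing $\ker\pi$, transfer of the prime/semiprime conditions via surjectivity (i.e.\ $\pi(a\A b)=\pi(a)\,\B\,\pi(b)$), and descent of the weak/strong left Jacobson property through the maximal left ideals lying above $\ker\pi$. Your phrasing of the last step through preimages (which commute with intersections) merely makes explicit a detail the paper leaves implicit when it writes $J=\bigcap\{\pi(\maxLId)\mid\maxLId\in M\}$; the argument is the same.
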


\begin{proof}
  It is easy to check that the image and preimage under $\pi$ give a bijective correspondence between the left ideals of
  $\B$ and the left ideals of $\A$ that contain $\ker \pi$. This correspondence clearly preserves the order by inclusion,
  so it preserves maximal left ideals. In the following we check that this correspondence also preserves semiprime left
  ideals; the argument for prime left ideals is analogous.

  Let $J$ be a semiprime left ideal of $\B$. To prove that the left ideal $\pi^{-1}(J)$ is semiprime, pick $a \in \A$
  such that $a \A a \subseteq \pi^{-1}(J)$. It follows that $\pi(a) \pi(\A) \pi(a) \subseteq J$.
  Since $J$ is semiprime and $\pi(\A)=\B$, it follows that $\pi(a) \in J$. Therefore $a \in \pi^{-1}(J)$.

  Let $I$ be a semiprime left ideal of $\A$ containing $\ker \pi$.
  To show that $\pi(I)$ is semiprime, pick $b \in \B$ such that $b\B b \subseteq \pi(I)$.
  Since $\pi$ is surjective, we have $\B=\pi(\A)$ and $b=\pi(a)$ for some $a \in \A$.
  It follows that $a \A a \subseteq \pi^{-1}(\pi(I))=I$ since $\ker \pi \subseteq I$, and so $a \in I$ since $I$ is semiprime. Therefore $b \in \pi(I)$.

  Finally assume that $\A$ is weakly (strongly) left Jacobson and let $J$ be a semiprime (prime) left ideal of $\B$.
  Then $\pi^{-1}(J)$ is a semiprime (prime) left ideal of $\A$, so $\pi^{-1}(J) = \bigcap M$ where $M$ is a set of
  maximal left ideals of $\A$. For every $\maxLId \in M$ clearly $\ker \pi \subseteq \pi^{-1}(J) \subseteq \maxLId$,
  so $\pi(\maxLId)$ is a maximal left ideal of $\B$. It follows that $J=\bigcap \{ \pi(\maxLId) \mid \maxLId \in M\}$.
  This shows that $\B$ is weakly (strongly) left Jacobson.
\end{proof}

As a special case of Lemma \ref{lemma:homomorphicimage} we obtain:

\begin{corollary} \label{weyl2}
  Any algebra over a field of characteristic $0$ which has $A_1(\FF)$ as a homomorphic image is not weakly left Jacobson.
\end{corollary}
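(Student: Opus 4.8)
The plan is to read off the statement as the contrapositive of the last assertion in Lemma~\ref{lemma:homomorphicimage} combined with Theorem~\ref{weyl1}, so essentially no new work is needed. Let $\A$ be an $\FF$-algebra over a field $\FF$ of characteristic $0$ that admits a surjective ring homomorphism $\pi \colon \A \to A_1(\FF)$; our goal is to show $\A$ is not weakly left Jacobson.

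First I would recall that by Theorem~\ref{weyl1} the first Weyl algebra $A_1(\FF)$ is not weakly left Jacobson. Next I would argue by contradiction: suppose $\A$ \emph{were} weakly left Jacobson. Then Lemma~\ref{lemma:homomorphicimage}, applied to the surjection $\pi$, would force the homomorphic image $A_1(\FF)$ to be weakly left Jacobson as well, contradicting Theorem~\ref{weyl1}. Hence $\A$ is not weakly left Jacobson, which is the claim.

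There is no real obstacle here; the only things to be careful about are purely bookkeeping: one must note that being ``an algebra over a field of characteristic $0$ which has $A_1(\FF)$ as a homomorphic image'' is exactly the hypothesis ``there is a surjective ring homomorphism $\A \to A_1(\FF)$'' needed to invoke Lemma~\ref{lemma:homomorphicimage}, and that the characteristic-$0$ assumption is inherited by $\FF$ (the base field of $A_1(\FF)$) so that Theorem~\ref{weyl1} indeed applies. If one wished, the same one-line argument also shows that such an $\A$ is not strongly left Jacobson, since strongly left Jacobson implies weakly left Jacobson.
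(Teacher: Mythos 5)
Your proposal is correct and matches the paper exactly: the corollary is stated there as an immediate special case of Lemma~\ref{lemma:homomorphicimage} (a weakly left Jacobson ring has weakly left Jacobson homomorphic images) combined with Theorem~\ref{weyl1}, which is precisely your contrapositive argument. No gaps; the bookkeeping remarks about the hypotheses are fine.
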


Let us give some special cases of Corollary \ref{weyl2}.

\begin{example} \label{counter2}
  The Heisenberg Lie algebra $\mathfrak{h}$  is the  Lie algebra over a field $\FF$ with basis
  $X, Y, Z \in \mathfrak{h}$ and with the Lie bracket satisfying $[X , Y ] = Z$ and $[X , Z ] = [ Y , Z ] = 0$.
  We have a homomorphism $\Phi$ from the enveloping algebra $U(\mathfrak{h})$ onto the Weyl algebra $A_1(\FF)$
  defined by $\Phi(X)=x$, $\Phi(Y)=y$ and $\Phi(Z)=-1$. If $\FF$ has characteristic $0$, then $U(\mathfrak{h})$ is not left Jacobson.
\end{example}

\begin{example}
  Since $A_1(\FF)=\FF\langle x,y \rangle/(yx-xy-1)$ is a homomorphic image of $\FF\langle x,y \rangle$,
  the free $\FF$-algebra $\FF\langle x,y \rangle$ is not left Jacobson if $\FF$ has characteristic $0$.
\end{example}

Recall that by \cite[Cor~9.1.8]{mcr} the algebras from Theorem~\ref{weyl1} and Example~\ref{counter2} are Jacobson.

\section{Some weakly Jacobson rings}
\label{sec4}

In this section we show that a ring that is finitely generated as a module over its center is weakly left Jacobson
if and only if it is Jacobson; see Corollary~\ref{corollary:Ibot}.

Consider any left ideal $I$ of a ring $\A$ with center $R \coloneqq \zentrum(\A)$. Set
\begin{equation}
  \label{eq:quotient}
  [I : \A] \coloneqq \{ a \in \A \mid a \A \subseteq I \}
  .
\end{equation}
This is a particular example of an ideal quotient. It is easy to check that $[I : \A]$ is a
two-sided ideal of $\A$, and $[I : \A] \subseteq I$ because $\A$ has a unit.

\begin{proposition} \label{proposition:sim}
  Let $\A$ be a ring with center $R \coloneqq \zentrum(\A)$ and $I$ a prime left ideal of $\A$.
  Then $[I:\A]$ is a (two-sided) prime ideal of $\A$ and $I \cap R$ is a prime ideal of $R$.
  Moreover,  $I \cap R = [I:\A] \cap R$, which implies that the inclusion mapping $R \subseteq \A$
  induces an injective mapping of the integral domain $R/(I \cap R)$ into the center of the prime ring $\A/[I:\A]$.
\end{proposition}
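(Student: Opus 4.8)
The plan is to verify the four assertions essentially in the order stated, using only the definitions of prime left and prime two-sided ideals together with the facts that $R=\zentrum(\A)$ is central and that $\A$ is unital. Throughout I use that $[I:\A]\subseteq I$ (because $1\in\A$) and that $I$, being a prime left ideal, is proper, so that $1\notin I$ and hence $1\notin[I:\A]$ and $1\notin I\cap R$; in particular none of the three ideals in question is improper. I also note at the outset that $a\A b\subseteq[I:\A]$ says exactly $a\A b\A\subseteq I$.

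First I would prove that $[I:\A]$ is a prime two-sided ideal. Suppose $a\A b\subseteq[I:\A]$, i.e.\ $a\A b\A\subseteq I$, and suppose $a\notin[I:\A]$, so that $ac\notin I$ for some $c\in\A$. For every $d\in\A$ we have $(ac)\A(bd)=a(c\A b)d\subseteq a\A b\A\subseteq I$; since $I$ is a prime left ideal and $ac\notin I$, it follows that $bd\in I$. As $d$ is arbitrary this gives $b\A\subseteq I$, that is $b\in[I:\A]$, so $[I:\A]$ is prime.

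Next I would show that $I\cap R$ is a prime ideal of the commutative ring $R$: it is an ideal of $R$ since $R$ is central and $I$ is a left ideal, and if $x,y\in R$ satisfy $xy\in I$ then centrality of $x$ gives $x\A y=\A xy\subseteq I$, whence primeness of the left ideal $I$ forces $x\in I$ or $y\in I$. For the equality $I\cap R=[I:\A]\cap R$, the inclusion $\supseteq$ is immediate from $[I:\A]\subseteq I$; conversely, if $x\in I\cap R$ then centrality gives $x\A=\A x\subseteq I$, so $x\in[I:\A]$.

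Finally, the composite of the inclusion $R\hookrightarrow\A$ with the quotient map $\A\to\A/[I:\A]$ is a ring homomorphism with kernel $R\cap[I:\A]=I\cap R$, so it induces an injective homomorphism $R/(I\cap R)\to\A/[I:\A]$; its image is central because every element of $R$ is central in $\A$. That $R/(I\cap R)$ is an integral domain and $\A/[I:\A]$ a prime ring merely restates the primeness of $I\cap R$ in $R$ and of $[I:\A]$ in $\A$ proved above. The only step requiring a small idea is the primeness of $[I:\A]$: one feeds $a\A b\A\subseteq I$ into the definition of a prime left ideal after replacing $a$ and $b$ by $ac$ and $bd$, so that the product is bracketed as $(ac)\A(bd)$; everything else is a direct consequence of the centrality of $R$ and the unitality of $\A$.
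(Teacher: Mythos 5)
Your proof is correct and follows essentially the same route as the paper: the key step for primeness of $[I:\A]$ is the same trick of feeding a perturbed product (your $(ac)\A(bd)\subseteq I$, the paper's $(ab)\A(a'b')\subseteq I$) into the definition of a prime left ideal, with only the roles of the two factors swapped, and the arguments for $I\cap R$ being prime and for $I\cap R=[I:\A]\cap R$ are identical, using centrality of $R$ and $[I:\A]\subseteq I$. The explicit verification of the induced injection $R/(I\cap R)\to\A/[I:\A]$ is a harmless elaboration of what the paper leaves as an immediate consequence.
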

\begin{proof}
  Consider $a,a' \in \A$ such that $a \A a' \subseteq [I:\A]$.
  If $a' \notin [I:\A]$, then there exists $b' \in \A$ such that $a'b' \notin I$.
  However, $abca'b' \in I$ for all $b,c \in \A$ because $abca' \in [I:\A]$,
  and therefore $ab \in I$ for all $b\in \A$ because $I$ is prime, i.e.\ $a \in [I:\A]$.
  This shows that $[I:\A]$ is a prime ideal of $\A$.

  Next consider $r,s \in R$ such that $rs \in I$. Then $r\A s = \A rs \subseteq I$,
  so $r\in I$ or $s\in I$ because $I$ is prime. This shows that $I \cap R$ is a prime ideal of $R$.
  
Since $[I:\A]  \subseteq I$, we have $[I:\A] \cap R \subseteq I \cap R$. To prove the converse pick 
$r \in I \cap R$ and note that $r \A=\A r \subseteq I$ which implies that $r \in   [I:\A]$.
\end{proof}

\begin{definition}
\label{def:ai}
  Let $\A$ be a ring with center $R \coloneqq \zentrum(\A)$ and $I$ a prime left ideal of $\A$. By Proposition \ref{proposition:sim}
  we can identify the integral domain $\bar{R}\coloneqq R/(I \cap R)$ with a subset of the center of the prime ring $\bar{\A}\coloneqq \A/[I:\A]$.
  Write $S\coloneqq \bar{R} \setminus \{0\}$ and note that the field $R_I\coloneqq S^{-1} \bar{R}$ is contained in the center of the prime ring
 $\A_I \coloneqq  S^{-1} \bar{\A}$. For every $a \in \A$ and $q \in R \setminus I$ we write $a/q \coloneqq ( \bar{q})^{-1} \bar{a} \in \A_I$ where
 $\bar{a}$ is the projection of $a$ to $\bar{\A}$ and $\bar{q}$ is the projection of $q$ to $\bar{R}$. 
\end{definition}

\begin{remark}
\label{rem:ai}
If $a,b \in \A$ and $q,s \in R \setminus I$ then
$a/q + b/s =(sa+qb)/(qs)$ and $(a/q)(b/s) = (ab)/(qs)$.
Moreover, $a/q=b/s$  iff $sa-qb \in [I:\A]$.
To see that $\A_I$ is indeed prime assume that $(a/q)(x/1)(b/s)= 0/1$
for every $x \in \A$. It follows that $axb \in [I:\A]$ for all $x\in \A$. Since $[I:\A]$
 is a prime ideal, $a\in [I:\A]$ or $b\in [I:\A]$. So $a/q = 0/1$ or $b/s = 0/1$.
\end{remark}

\begin{proposition} \label{proposition:genI}
  Let $\A$ be a ring with center $R \coloneqq \zentrum(\A)$ and $I$ a prime left ideal of $\A$.
  Assume $I \neq \A$. Then
  \begin{align}
    \langle I \rangle
    \coloneqq
    {}&\bigl\{ a/q \mid a\in I~\text{and}~q\in R \setminus I \bigr\}
  \intertext{is a left ideal of $\A_I$ and}
    \label{eq:genIintersection}
    I
    =
    {}& \bigl\{ a\in \A \mid a/1 \in \langle I \rangle \bigr\}
    .
  \end{align}
\end{proposition}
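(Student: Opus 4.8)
The plan is to verify the two assertions of Proposition~\ref{proposition:genI} separately, the first being essentially routine and the second being the substantive one. First I would check that $\langle I \rangle$ is a left ideal of $\A_I$. By Remark~\ref{rem:ai}, a general element of $\A_I$ has the form $b/s$ with $b \in \A$, $s \in R \setminus I$, and $(b/s)(a/q) = (ba)/(sq)$. Since $I$ is a left ideal of $\A$ we have $ba \in I$, and $sq \in R \setminus I$ because $I \cap R$ is a prime ideal of $R$ by Proposition~\ref{proposition:sim} (so its complement is multiplicatively closed) and neither $s$ nor $q$ lies in $I$. Hence $(ba)/(sq) \in \langle I \rangle$. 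Closure under addition follows the same way from the addition formula in Remark~\ref{rem:ai}, using again that $R \setminus I$ is closed under multiplication. So $\langle I \rangle$ is a left ideal of $\A_I$.

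Next I would prove the identity~\eqref{eq:genIintersection}. The inclusion $I \subseteq \{ a \in \A \mid a/1 \in \langle I \rangle\}$ is immediate: if $a \in I$ then $a/1 \in \langle I \rangle$ by definition (take $q = 1$). For the reverse inclusion, suppose $a \in \A$ with $a/1 \in \langle I \rangle$, so $a/1 = b/q$ for some $b \in I$ and $q \in R \setminus I$. By the equality criterion in Remark~\ref{rem:ai}, this means $q a - b \in [I:\A]$, hence $qa - b \in I$ since $[I:\A] \subseteq I$. As $b \in I$ this gives $qa \in I$, i.e. $q a \in I$ with $q \in R \setminus I$ a central element. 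Now I invoke primality of $I$: from $q a \in I$ we get $q \A a = \A q a \subseteq I$ (using centrality of $q$), so $q\A a \subseteq I$ and primality of the left ideal $I$ forces $q \in I$ or $a \in I$; since $q \notin I$ by assumption, $a \in I$. This proves $\{a \in \A \mid a/1 \in \langle I\rangle\} \subseteq I$ and hence~\eqref{eq:genIintersection}.

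The one point that needs a little care — and which I expect to be the only real obstacle — is getting from $qa \in I$ to the conclusion $a \in I$ using the definition of primeness for \emph{one-sided} ideals, which is stated in terms of $aRb \subseteq I$ rather than $ab \in I$. The trick is that $q$ is central, so $q\A a = \A (qa) \subseteq I$ already gives the hypothesis $q \A a \subseteq I$ in the right form, and then $q \notin I$ rules out the first alternative. I should also make sure to handle $I \neq \A$ where it is used: this is only needed so that $R \setminus I$ is nonempty (it contains $1$) and so that $\A_I$ and the localization in Definition~\ref{def:ai} are nontrivial; the argument above does not otherwise require it. No separate treatment of edge cases is needed beyond remarking that $1 \in R \setminus I$.
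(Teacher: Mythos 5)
Your proof is correct and follows essentially the same route as the paper's: verify closure of $\langle I \rangle$ under addition and left multiplication via the formulas of Remark~\ref{rem:ai}, and for the reverse inclusion in \eqref{eq:genIintersection} translate $a/1=b/q$ into $qa-b\in[I:\A]\subseteq I$, deduce $q\A a\subseteq I$ from centrality of $q$, and conclude $a\in I$ by primeness since $q\notin I$. Your extra remarks (multiplicative closedness of $R\setminus I$ via Proposition~\ref{proposition:sim}, and the role of $I\neq\A$ in ensuring $1\in R\setminus I$) only make explicit what the paper leaves implicit.
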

\begin{proof}
  Given $a/q, b/s \in \langle I \rangle$ with $a,b \in I$ and $q,s \in R \setminus I$,
  then $a/q+b/s = (sa + qb)/(qs) \in \langle I \rangle$, so $\langle I \rangle$ is closed under
  addition. Given $a/q \in \A_I$ and $b/s \in \langle I \rangle$ with $a \in \A$, $b \in I$,
  and $q,s \in R \setminus I$, then $(a/q)(b/s) = (ab)/(qs) \in \langle I\rangle$, so $\langle I \rangle$
  is a left ideal of $\A_I$.

  The inclusion ``$\subseteq$'' in \eqref{eq:genIintersection} holds by definition of $\langle I \rangle$. Conversely,
  suppose $a \in \A$ fulfils $a/1 \in \langle I \rangle$. This means  there are $b\in I$
  and $s\in R \setminus I$ such that $a/1 = b/s$; i.e.\ $sa - b \in [I:\A]$. Since $[I:\A] \subseteq I$ and $b \in I$, it follows that $sa \in I$
  and so $s\A a = \A sa \subseteq I$. Therefore $a \in I$ because $I$ is prime and $s\notin I$.
\end{proof}

Recall that any nondegenerate bilinear form $\beta \colon V \times V \to \FF$ on a finite-dimensional $\FF$-vector space $V$
induces an $\FF$-linear isomorphism between $V$ and its dual vector space $V^*$, namely $\argument^\flat \colon V \to V^*$,
$w\mapsto w^\flat$ with $w^\flat(v) \coloneqq \beta(v,w)$ for all $v,w\in V$. In this case it follows that for every
linear subspace $U$ of $V$ and any $v\in V \setminus U$ there exists $w\in V$ such that $\beta(u,w) = 0$ for all $u \in U$ and $\beta(v,w) \neq 0$.

\begin{theorem} \label{theorem:Ibot}
  Let $\A$ be a ring that is finitely generated as a module over its center
  $R \coloneqq \zentrum(\A)$ and $I$ a prime left ideal of $\A$.
  Write
  \begin{align}
    I^\perp \coloneqq{}&  \bigl\{ b \in \A \mid I b \subseteq [I : \A ]  \bigr\}.
  \intertext{Then $I^\perp$ is a right ideal of $\A$ and}
    \label{eq:Ibot}
    I ={}& \bigl\{ a \in \A \mid a I^\perp \subseteq [I : \A ] \bigr\}.
  \end{align}
\end{theorem}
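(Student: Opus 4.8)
The plan is to transfer the statement to the localized ring $\A_I$ of Definition~\ref{def:ai}, where it becomes a double annihilator identity in a finite-dimensional simple algebra. First, dispose of the trivial case $I=\A$: then $[I:\A]=\A=I^\perp$ and both sides of \eqref{eq:Ibot} equal $\A$, so assume $I\neq\A$ from now on. The key preliminary observation is that $\A_I$ is a \emph{finite-dimensional simple} $R_I$-algebra: since $\A$ is a finitely generated $R$-module, $\bar\A=\A/[I:\A]$ is a finitely generated $\bar R$-module, hence $\A_I=S^{-1}\bar\A$ is finitely generated over the field $R_I=S^{-1}\bar R$, i.e.\ $\A_I$ is a finite-dimensional $R_I$-algebra; being also prime (Remark~\ref{rem:ai}) it has no nonzero nilpotent ideal, so its Jacobson radical (nilpotent because $\A_I$ is Artinian) vanishes, $\A_I$ is semisimple, and a prime semisimple ring is simple Artinian.

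That $I^\perp$ is a right ideal is immediate: it is an additive subgroup, and $I(bc)=(Ib)c\subseteq[I:\A]c\subseteq[I:\A]$ for $b\in I^\perp$, $c\in\A$, since $[I:\A]$ is a two-sided ideal. The inclusion ``$\subseteq$'' in \eqref{eq:Ibot} is equally direct: if $a\in I$ and $b\in I^\perp$, then $ab\in Ib\subseteq[I:\A]$, so $aI^\perp\subseteq[I:\A]$. For the reverse inclusion I would first check, using the arithmetic of $\A_I$ from Remark~\ref{rem:ai}, that $\mathrm{r.ann}_{\A_I}(\langle I\rangle)=\{\,b/q\mid b\in I^\perp,\ q\in R\setminus I\,\}$: the inclusion ``$\supseteq$'' uses $ab\in[I:\A]$ for $a\in I$, $b\in I^\perp$, and ``$\subseteq$'' follows by writing an element of the annihilator as $c/q$ and reading off $Ic\subseteq[I:\A]$, i.e.\ $c\in I^\perp$.

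Now suppose $a\in\A$ satisfies $aI^\perp\subseteq[I:\A]$. Then $(a/1)(b/q)=(ab)/q=0$ for all $b\in I^\perp$ and $q\in R\setminus I$, so $a/1$ annihilates $\mathrm{r.ann}_{\A_I}(\langle I\rangle)$ on the left. It therefore suffices to prove the double annihilator identity $\mathrm{l.ann}_{\A_I}\bigl(\mathrm{r.ann}_{\A_I}(\langle I\rangle)\bigr)=\langle I\rangle$ in $\A_I$, for then $a/1\in\langle I\rangle$ and hence $a\in I$ by \eqref{eq:genIintersection}. This is where the nondegenerate bilinear form prepared just before the statement enters. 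A finite-dimensional simple $R_I$-algebra carries an $R_I$-linear functional $\tau$ whose associated form $\beta(x,y)\coloneqq\tau(xy)$ is nondegenerate (e.g.\ write $\A_I\cong\Mat_m(D)$ with $D$ a division $R_I$-algebra and put $\tau(X)=\tau_0(\sum_i X_{ii})$ for any nonzero functional $\tau_0$ on $D$). For a left ideal $L$ one has $L^{\perp_\beta}=\mathrm{r.ann}_{\A_I}(L)$, because $\tau(Ly)=0$ forces $\tau(\A_I Ly)=\tau(Ly)=0$, putting $Ly$ in the (zero) radical of $\beta$; symmetrically $\mathrm{l.ann}_{\A_I}(W)={}^{\perp_\beta}W$ for right ideals $W$, and combined with the vector-space identity $U={}^{\perp_\beta}(U^{\perp_\beta})$ recalled before Theorem~\ref{theorem:Ibot} this yields the claim. (One may instead bypass $\tau$: since $\A_I$ is semisimple, $\langle I\rangle=\A_I e$ for an idempotent $e$, whence $\mathrm{r.ann}_{\A_I}(\langle I\rangle)=(1-e)\A_I$ and $\mathrm{l.ann}_{\A_I}((1-e)\A_I)=\A_I e$.)

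I expect the main obstacle to be exactly this last step — verifying that the localization really lands in a finite-dimensional \emph{simple} algebra (so that the nondegenerate associative form, equivalently the double annihilator property, is available), rather than merely a prime or merely finite-dimensional one; the tracking of $I$, $I^\perp$ and $[I:\A]$ through the passage $\A\rightsquigarrow\bar\A\rightsquigarrow\A_I$ is routine bookkeeping of the kind already carried out in Propositions~\ref{proposition:sim} and~\ref{proposition:genI}.
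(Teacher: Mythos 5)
Your proposal is correct and follows essentially the same route as the paper: pass to the central localization $\A_I$, note that it is a finite-dimensional simple algebra over the field $R_I$, separate from $\langle I\rangle$ by orthogonality with respect to a nondegenerate associative bilinear form on $\A_I$, and translate back via Proposition~\ref{proposition:genI}. The only divergence is minor: where the paper uses the reduced trace of the central simple algebra $\A_I$ (citing Lewis and Draxl) to get the nondegenerate form, you use an arbitrary nonzero functional on the division-algebra component (or bypass the form entirely with the idempotent decomposition $\langle I\rangle=\A_I e$), which is an adequate and somewhat more self-contained substitute.
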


\begin{proof}
  It is easy to check that $I^\perp$ is a right ideal of $\A$ and that the inclusion ``$\subseteq$'' holds in \eqref{eq:Ibot},
  so it only remains to prove the converse inclusion ``$\supseteq$''.
  Assume $c \in \A \setminus I$, then in order to complete the proof we have to find $b\in I^\perp$
  such that $cb \notin [I : \A]$. Note that $I \neq \A$.

  By Definition~\ref{def:ai} and Remark \ref{rem:ai}, $\A_I$ is a prime $R_I$-algebra. 
  As $\A$ is finitely generated as an $R$-module by assumption, $\A_I$ has finite dimension over $R_I$,
  i.e.\ $\A_I$ is a finite-dimensional prime algebra over the field $R_I$.
  By \cite[Prop.~11.7]{lam} this implies that $\A_I$ is simple, so $\A_I$ is central simple over its center $\zentrum(\A_I) \supseteq R_I$.
  By \cite[Lemma~1.1]{lewis} and \cite[Lemma 4 and formulas (9) and (11) in $\S$22]{draxl},
  the reduced trace $\TR \colon \A_I \to \zentrum(\A_I)$ is a nondegenerate trace functional;
  i.e.\ $\TR$ is a $\zentrum(\A_I)$-linear functional that fulfils $\TR(xy) = \TR(yx)$ for all $x,y\in \A_I$,
  and for every $x\in \A_I \setminus \{0/1\}$ there exists $y\in \A_I$ such that $\TR(xy) \neq 0/1$.

  As $c \in \A \setminus I$ it follows from \eqref{eq:genIintersection} of the previous Proposition~\ref{proposition:genI}
  that $c/1 \in \A_I \setminus \langle I \rangle$. As $\langle I \rangle$ is a left ideal of $\A_I$ it is in particular a
  $\zentrum(\A_I)$-linear subspace. As the bilinear functional $\A_I \times \A_I \ni (x,y) \mapsto \TR(xy) \in \zentrum(\A_I)$
  is nondegenerate and as $\A_I$ is finite-dimensional over $R_I$, hence also over $\zentrum(\A_I)$,
  there exists $y \in \A_I$ such that $\TR(xy) = 0/1$ for all $x\in \langle I \rangle$
  and $\TR\bigl((c/1)y\bigr) \neq 0/1$.
  The former means that $xy = 0/1$ for all $x\in \langle I \rangle$,
  because $\TR$ is nondegenerate and $\TR(xyz) = \TR(zxy) = \TR(x'y) = 0/1$ for all $x\in \langle I \rangle$ and all $z\in \A_I$
  with $x' \coloneqq zx \in \langle I \rangle$. The latter implies that $(c/1) y \neq 0/1$.
  By definition of $\A_I$ there are $b\in \A$ and $s\in R\setminus I$ such that $y = b/s$.
  We have shown that $(ab)/(qs) = (a/q)(b/s) = 0/1$ for all $a\in I$, $q\in R \setminus I$,
  and that $(cb)/s = (c/1)(b/s) \neq 0/1$. The former means that $ab \in [I : \A]$ for all $a\in I$, i.e.\ $b\in I^\perp$,
  and the latter that $cb \notin [I:\A]$.
\end{proof}

\begin{corollary} \label{corollary:Ibot}
  Let $\A$ be a ring that is finitely generated as a module over its center $R \coloneqq \zentrum(\A)$.
  Then $\A$ is weakly left Jacobson if and only if $\A$ is Jacobson.
\end{corollary}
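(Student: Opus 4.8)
The plan is to deduce Corollary~\ref{corollary:Ibot} from Theorem~\ref{theorem:Ibot} together with the commutative Nullstellensatz-type facts recorded in the summary. Since every weakly left Jacobson ring is Jacobson (noted in Section~\ref{sec2}), only the nontrivial implication needs proof: assume $\A$ is a Jacobson ring that is finitely generated as a module over its center $R = \zentrum(\A)$, and show that every prime left ideal $I$ of $\A$ satisfies $\rad(I) = I$, i.e.\ $I$ is an intersection of maximal left ideals. The case $I = \A$ being trivial, assume $I \neq \A$.

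First I would invoke Theorem~\ref{theorem:Ibot} to write $I = \{a \in \A \mid aI^\perp \subseteq [I:\A]\}$, which reduces the problem to showing that $[I:\A]$ is an intersection of two-sided ideals that are themselves intersections of maximal left ideals, in a way compatible with the right module $I^\perp$. More precisely, the key point is that $[I:\A]$ is a prime \emph{two-sided} ideal of $\A$ (Proposition~\ref{proposition:sim}), so since $\A$ is Jacobson, $[I:\A] = \rad([I:\A]) = \bigcap_{\maxLId \in \mathcal{M}} \maxLId$ for a suitable family $\mathcal{M}$ of maximal left ideals of $\A$ (using that the Jacobson radical of a two-sided ideal, being the intersection of the primitive ideals above it and hence of maximal left ideals above it, equals the prime radical when the ring is Jacobson). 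Then for each such $\maxLId$ I would consider the left ideal $\maxLId^\perp \coloneqq \{a \in \A \mid aI^\perp \subseteq \maxLId\}$, or more directly express $I$ as the intersection over $\maxLId \in \mathcal{M}$ of the left ideals $\{a \in \A \mid aI^\perp \subseteq \maxLId\}$, and show each of these is an intersection of maximal left ideals.

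The mechanism for the last step is the following: if $N$ is a maximal left ideal of $\A$ and $b \in \A$, then $\{a \in \A \mid ab \in N\}$ is either all of $\A$ (when $b \in N$, roughly) or a maximal left ideal, because $\{a \mapsto ab + N\}$ realizes $\A/\{a : ab \in N\}$ as a nonzero submodule of the simple module $\A/N$, hence an isomorphism; this is exactly the style of argument used in the proof of Lemma~\ref{interpretationmax}. Intersecting $\{a : ab \in N\}$ over a spanning set of $b$'s in $I^\perp$ and over the family $\mathcal{M}$, I obtain $I$ as an intersection of maximal left ideals. I should be careful that $I^\perp$, being merely a right ideal, is handled via a generating set; since $\A$ is a finitely generated $R$-module it is Noetherian over $R$, but I only need that $I^\perp$ is spanned as an additive group by some set, which is automatic.

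The main obstacle I anticipate is bookkeeping the interplay between the two-sided ideal $[I:\A]$, its expression as an intersection of maximal left ideals, and the "perp" operation $b \mapsto \{a : ab \in \maxLId\}$ — in particular verifying that $\bigcap_{\maxLId \in \mathcal{M}} \bigcap_{b \in I^\perp} \{a \in \A \mid ab \in \maxLId\}$ really collapses back to $\{a : aI^\perp \subseteq [I:\A]\} = I$ rather than to something larger, which rests on $[I:\A] = \bigcap_{\maxLId} \maxLId$ and on the fact that $aI^\perp \subseteq [I:\A]$ iff $ab \in \maxLId$ for every $b \in I^\perp$ and every $\maxLId \in \mathcal{M}$. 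Once this equality of sets is checked, discarding the indices $b$ for which $\{a : ab \in \maxLId\} = \A$ leaves an intersection of genuine maximal left ideals equal to $I$, completing the proof; the converse implication and the $I = \A$ case are immediate.
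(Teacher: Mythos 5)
Your proposal is correct and uses exactly the same ingredients as the paper's proof: Theorem~\ref{theorem:Ibot}, the primality of $[I:\A]$ from Proposition~\ref{proposition:sim}, the Jacobson hypothesis applied to $[I:\A]$ to realize it as an intersection of maximal left ideals, and the Lemma~\ref{interpretationmax}-style observation that $\{a \in \A \mid ab \in \maxLId\} = \ann_{\A/\maxLId}([b])$ is either $\A$ or maximal. The only difference is cosmetic: you write $I$ globally as $\bigcap_{\maxLId}\bigcap_{b \in I^\perp}\ann_{\A/\maxLId}([b])$, while the paper separates each $c \notin I$ individually by choosing one suitable $b \in I^\perp$ and one maximal $\tilde\maxLId \supseteq [I:\A]$ with $cb \notin \tilde\maxLId$.
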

\begin{proof}
  If $\A$ is weakly left Jacobson then $\A$ clearly is Jacobson. Conversely, assume that $\A$ is Jacobson and let
  $I$ be a prime left ideal of $\A$. We show that $I$ is the intersection of all maximal left ideals of $\A$ that contain $I$.
  In order to do so consider any $c \in \A \setminus I$. The proof is complete once we have found a maximal
  left ideal $\maxLId$ of $\A$ that fulfils $I \subseteq \maxLId$ and $c \notin \maxLId$.

  By the previous Theorem~\ref{theorem:Ibot} there exists $b\in I^\perp$ such that $cb \notin [I:\A]$.
  As $[I:\A]$ is a prime ideal of $\A$ by Proposition~\ref{proposition:sim}, and as $\A$ is Jacobson by assumption,
  there exists a maximal left ideal $\tilde\maxLId$ of $\A$ that fulfils $[I:\A] \subseteq \tilde\maxLId$ and $cb \notin \tilde\maxLId$.
  We construct the simple $\A$-left module $M \coloneqq \A / \tilde\maxLId$ and write $[a] \in M$ for the equivalence class of an element $a\in \A$.
  Then $a [b] = [ab] = [0]$ for all $a \in I$ because $b\in I^\bot$ so that $ab \in [I:\A] \subseteq \tilde\maxLId$, and $c [b] = [cb] \neq [0]$.
  Note that this also implies that $[b] \neq [0]$. So the annihilator $\maxLId \coloneqq \ann_M([b]) = \{ a \in \A \mid a[b] = [0] \}$
  is a maximal left ideal of $\A$ because $M$ is simple (see Lemma~\ref{interpretationmax}), and $I \subseteq \maxLId$
  and $c \notin \maxLId$ hold as required.
\end{proof}

\begin{corollary}
  Let $\A$ be a ring that is finitely generated as a module over its center $R \coloneqq \zentrum(\A)$.
  If $R$ is Noetherian, then $\A$ is weakly left Jacobson.
\end{corollary}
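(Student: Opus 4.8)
The plan is to reduce this corollary to Corollary~\ref{corollary:Ibot} by showing that the hypotheses force $\A$ to be Jacobson. Since $\A$ is finitely generated as a module over its center $R$, and $R$ is Noetherian, the standard theory of rings satisfying a polynomial identity applies: $\A$ is a module-finite extension of a Noetherian commutative ring, hence $\A$ is itself (left and right) Noetherian, and $\A$ is a PI-ring. First I would invoke the commutative Nullstellensatz-type input in the form it is needed here: a Noetherian commutative ring that is finitely generated as an algebra over a field is Jacobson, but more to the point, for the module-finite PI setting one has that $\A$ is Jacobson provided $R$ is Jacobson. So the one thing genuinely left to check is that a \emph{Noetherian} commutative ring $R$ over which $\A$ is module-finite need not be Jacobson in general — wait, that is false, so the honest route is different.

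Let me restate the intended argument cleanly. The hypothesis "$R$ Noetherian" together with "$\A$ module-finite over $R$" does \emph{not} by itself make $R$ Jacobson (e.g.\ a DVR is Noetherian but not Jacobson). So the corollary as I read it cannot follow from Corollary~\ref{corollary:Ibot} via "Jacobson"; instead I expect the authors intend a direct argument that does not pass through the Jacobson property of $R$ at all. Concretely: one takes a prime left ideal $I$ of $\A$ and argues, using Theorem~\ref{theorem:Ibot}, that the representation-theoretic machinery localizes the problem to the \emph{finite-dimensional} algebra $\A_I$ over the field $R_I$. A finite-dimensional algebra over a field is trivially Jacobson (it is Artinian, so every prime two-sided ideal is maximal and every prime left ideal is an intersection — in fact a finite one — of maximal left ideals), and crucially this needs \emph{no} Jacobson hypothesis on the base ring. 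What the Noetherian hypothesis buys is that the localization procedure behaves well: $\bar{R} = R/(I\cap R)$ is a Noetherian domain, its fraction field $R_I$ is formed by inverting $S = \bar R\setminus\{0\}$, and module-finiteness of $\A$ over $R$ is preserved, so $\A_I$ is indeed finite-dimensional over $R_I$.

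So the key steps, in order, are: (1) fix a prime left ideal $I$ of $\A$ and a witness $c\in\A\setminus I$; (2) apply Theorem~\ref{theorem:Ibot} to obtain $b\in I^\perp$ with $cb\notin[I:\A]$; (3) observe that $[I:\A]$ is a prime two-sided ideal of $\A$ (Proposition~\ref{proposition:sim}) and that $\A/[I:\A]$ embeds into the finite-dimensional algebra $\A_I$ over $R_I$, which is simple by \cite[Prop.~11.7]{lam}, hence Artinian; (4) in the Artinian ring $\A_I$ the image of $cb$ is nonzero, so it lies outside some maximal left ideal, which pulls back through the localization map and then through $\A\to\A/[I:\A]$ to a maximal left ideal $\tilde\maxLId$ of $\A$ containing $[I:\A]$ with $cb\notin\tilde\maxLId$; (5) finally run the annihilator construction exactly as in the proof of Corollary~\ref{corollary:Ibot}: set $M\coloneqq\A/\tilde\maxLId$, note $I[b]=[0]$ while $c[b]\neq[0]$, and take $\maxLId\coloneqq\ann_M([b])$, which is maximal by Lemma~\ref{interpretationmax} and separates $c$ from $I$.

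The main obstacle I anticipate is step (4): verifying that in the Artinian ring $\A_I$ (or already in $\A/[I:\A]$) a nonzero element can be separated from $I$ — or rather from $\langle I\rangle$ — by a maximal \emph{left} ideal, and that this maximal left ideal descends correctly under localization. This is where one must be careful that the correspondence between left ideals of $\A/[I:\A]$ containing (the image of) $[I:\A]$ and those of $\A$ (Lemma~\ref{lemma:homomorphicimage}) interacts properly with the central localization $S^{-1}(\argument)$; in particular one needs that $S$ consists of central non-zero-divisors modulo $[I:\A]$, which follows because $\bar R$ injects into the center of the prime ring $\bar\A$ and central elements of a prime ring that are nonzero are regular. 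Once this localization bookkeeping is in place, the rest is the already-established argument of Corollary~\ref{corollary:Ibot} with "$\A$ Jacobson" replaced by the concrete statement "every prime two-sided ideal of $\A$ with Noetherian module-finite center is an intersection of maximal left ideals", which reduces to the Artinian case.
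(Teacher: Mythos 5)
The paper's own proof is a one-liner: it quotes \cite[Sec.~9.1.3]{mcr} for the claim that $\A$ is Jacobson whenever it is module-finite over a Noetherian center, and then applies Corollary~\ref{corollary:Ibot}. So the paper takes exactly the route you abandoned, and your reason for abandoning it is substantive: Noetherian-ness of $R$ does not by itself imply that $R$ (hence $\A$) is Jacobson, and in fact a discrete valuation ring $\A=R$ satisfies all the hypotheses of the corollary while its zero ideal is a prime (left) ideal whose Jacobson radical is the maximal ideal. So your observation is not an obstacle to be engineered around by a cleverer argument; it shows that the statement, read literally with only ``$R$ Noetherian'', admits no proof at all, and that it can only be correct if the cited result (or the statement itself) carries a Jacobson-type hypothesis on $R$, exactly as in Corollary~\ref{corollary:Ibot}. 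The right conclusion from your own example would have been to flag the hypothesis/citation, not to search for a Jacobson-free proof.

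Independently of this, the replacement argument you sketch breaks precisely at the point you yourself flag as the main obstacle, namely step (4), and the break cannot be repaired. The preimage of a maximal left ideal of $\A_I = S^{-1}\bar\A$ under the localization map $\bar\A \to \A_I$ (and then under $\A \to \bar\A$) is a left ideal of $\A$ containing $[I:\A]$ and missing $cb$, but it need not be maximal: maximality is exactly what is lost under central localization, since elements of $S$ become invertible only after localizing. The DVR makes this concrete: there $I = (0) = [I:\A]$, $\A_I$ is the fraction field, its unique maximal left ideal is $(0)$, and its preimage in $\A$ is $(0)$, which is not maximal. Step (5), however, requires a maximal left ideal of $\A$ itself containing $[I:\A]$ and avoiding $cb$ --- this is precisely where the Jacobson hypothesis on $\A$ is invoked in the proof of Corollary~\ref{corollary:Ibot} --- so your argument collapses at this descent step, and no ``localization bookkeeping'' can restore it, because the conclusion it is meant to prove is false for the DVR. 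In short: your criticism of the intended route is well taken, but the alternative proof does not (and cannot) close the gap.
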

\begin{proof}
  If $R$ is Noetherian, then $\A$ is Jacobson by \cite[Sec.~9.1.3]{mcr}, so $\A$ is weakly left Jacobson by the previous
  Corollary~\ref{corollary:Ibot}.
\end{proof}

\section{Some strongly Jacobson rings}
\label{sec5}
In this section we show that an Azumaya $R$-algebra is strongly left Jacobson if and only if $R$ is Jacobson.
This follows from Corollary  \ref{corollary:Ibot} and Proposition  \ref{proposition:semiprime2prime};
see Theorem \ref{theorem:stronglyLeftJacobson}.

Let the ring $\A$ be  a separable algebra over its center $R \coloneqq \zentrum(\A)$;
i.e.\ $\A$ is an Azumaya $R$-algebra. By \cite[Thm.~3.4 of Chap.~II]{demeyer} or \cite[Thm.~7.1.4]{ford}, $\A$ is a finitely generated
projective $R$-module and the morphism of $R$-algebras
\begin{equation}
  \label{eq:Xidef}
  \Xi_\A \colon \A \otimes_R \A^\op \to \End_R(\A),\quad a\otimes b \mapsto \Xi_\A(a\otimes b)
\end{equation}
defined by
\begin{equation}
  \Xi_\A(a\otimes b)(c) \coloneqq a c b\quad\quad\text{for $a,b,c\in \A$}
\end{equation}
is an isomorphism.
Here $\A^\op$ denoted the opposite ring of $\A$
and $\End_R(\A)$ is the $R$-algebra of $R$-module endomorphisms on $\A$.

\begin{lemma} \label{lemma:Xi}
  Let $\A$ be an Azumaya algebra with center $R \coloneqq \zentrum(\A)$ and let $(v_i,\omega_i)_{i=1,\ldots,d}$
  be a dual basis of $M \coloneqq \A$ considered as a (finitely generated projective) $R$-module;
  i.e.\ $v_1, \dots,v_d \in M$ and $\omega_1, \dots, \omega_d \in \Hom_R(M,R)$ are such that $\sum_{i=1}^d \omega_i(x)v_i = x$
  for every $x \in M$.
  Then there exist $\rho \in \NN$ and $e_i^\alpha, f_i^\alpha \in \A$ for $i\in \{1,\dots,d\}$, $\alpha \in \{1,\dots,\rho\}$
  such that  for all $i\in \{1,\dots,d\}$ and  $x \in \A$ we have
  \begin{equation}
    \label{eq:Xi}
    \sum_{\alpha = 1}^\rho e_i^\alpha x f_i^\alpha = \omega_i(x)
  \end{equation}
\end{lemma}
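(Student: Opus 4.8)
The plan is to feed the $R$-module endomorphism $\omega_i(\argument)v_j$ through the isomorphism $\Xi_\A$ of \eqref{eq:Xidef}. More precisely, fix $i\in\{1,\dots,d\}$ and consider the $R$-linear map $\phi_i\colon \A\to\A$ defined by $\phi_i(x)\coloneqq \omega_i(x)\cdot 1_\A$; this is an element of $\End_R(\A)$. Since $\Xi_\A$ is surjective, there is some element $t_i\in\A\otimes_R\A^\op$ with $\Xi_\A(t_i)=\phi_i$. Writing $t_i=\sum_{\alpha=1}^{\rho_i} e_i^\alpha\otimes f_i^\alpha$ as a finite sum of elementary tensors and applying $\Xi_\A$ gives $\sum_{\alpha=1}^{\rho_i} e_i^\alpha x f_i^\alpha = \omega_i(x)\cdot 1_\A$ for all $x\in\A$; identifying $R$ with $R\cdot 1_\A\subseteq\A$ this is exactly \eqref{eq:Xi}. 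Finally, to obtain a single $\rho$ that works uniformly for all $i$, pad the shorter sums with trivial terms $e_i^\alpha\otimes f_i^\alpha = 0\otimes 0$, or simply set $\rho\coloneqq\max_i\rho_i$ and allow some $e_i^\alpha$ to be zero; either way the identities \eqref{eq:Xi} are unaffected.

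First I would recall that $\A$ being Azumaya over $R$ gives, by \cite[Thm.~3.4 of Chap.~II]{demeyer} or \cite[Thm.~7.1.4]{ford}, that $\A$ is finitely generated projective as an $R$-module — this is what guarantees the existence of the dual basis $(v_i,\omega_i)_{i=1,\dots,d}$ in the first place — and that $\Xi_\A$ is an $R$-algebra isomorphism. Then I would carry out the three steps above in order: (1) identify the target endomorphism $\phi_i\in\End_R(\A)$, (2) pull it back along $\Xi_\A^{-1}$ and expand as a finite sum of simple tensors to read off $e_i^\alpha,f_i^\alpha$, (3) unify the index ranges. Each step is routine once the setup is in place.

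The only point requiring a little care is the identification of scalars with central elements: \eqref{eq:Xi} as written equates the element $\sum_\alpha e_i^\alpha x f_i^\alpha\in\A$ with $\omega_i(x)\in R$, so one must use the canonical embedding $R\hookrightarrow\zentrum(\A)\subseteq\A$, $r\mapsto r\cdot 1_\A$, and check it is compatible with the $R$-module structure on $\A$ — which it is, essentially by definition of an $R$-algebra. There is no real obstacle here; the lemma is a direct unpacking of the surjectivity of $\Xi_\A$, and the dual basis $v_i$ themselves do not even enter the final identities (they will presumably be used later, together with \eqref{eq:Xi}, to build the relevant left-ideal constructions). I expect the subsequent sections, not this lemma, to contain the substance.
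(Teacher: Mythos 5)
Your proposal is correct and matches the paper's own proof: the paper likewise regards each $\omega_i$ (via $R \subseteq \A$) as an element of $\End_R(\A)$ and takes preimages under the surjective map $\Xi_\A$, expanding them as finite sums of elementary tensors with $\rho$ chosen large enough. Your extra remarks about padding the index range and the embedding $R\hookrightarrow\zentrum(\A)$ are just the routine details the paper leaves implicit.
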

\begin{proof}
  Every  functional $\omega_i \colon \A \to R \subseteq \A$ with $i\in \{1,\dots,d\}$ describes an element of $\End_R(\A)$.
  Since $\Xi_\A \colon \A \otimes_R \A^\op \to \End_R(\A)$ is surjective, the preimages of $\omega_i$
  under $\Xi_\A$ provide elements $e_i^\alpha, f_i^\alpha \in \A$ for $i\in \{1,\dots,d\}$ and $\alpha \in \{1,\dots,\rho\}$,
  with $\rho \in \NN$ sufficiently large, such that \eqref{eq:Xi} holds.
\end{proof}

A submodule $N$ of a module $M$ over a commutative ring $R$ is called \emph{prime} if the following holds:
Whenever elements $r\in R$ and $m\in M$ fulfil $rm \in N$, then $r M \subseteq N$ or $m \in N$.
In \cite[Def.~2.1]{aryapoor}, a submodule $N$ of a module $M$  is called \emph{semiprime} if the following holds:
Whenever an element $m \in M$ fulfils $m \in \sum(N:m)M$ we have $m \in N$;
here $(N:m)$ denotes the ideal $(N:m) \coloneqq \{r\in R\mid rm \in N\}$ of $R$.
By \cite[Thm.~2.6]{aryapoor}, every semiprime submodule of a finitely generated $R$-module is an intersection of prime submodules.

\begin{lemma} \label{lem:altsemiprime}
  Suppose $M$ is a finitely generated projective $R$-module. Pick a dual basis $(v_i,\omega_i)_{i=1,\ldots,d}$ of $M$.
  A submodule $N$ of $M$ is semiprime if and only if for every $x \in M$ that fulfils $\omega_i(x) \in (N:x)$ for all $i\in \{1,\dots,d\}$ we have $x \in N$.
\end{lemma}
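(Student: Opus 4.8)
The plan is to show that the condition ``$\omega_i(x) \in (N:x)$ for all $i$'' is equivalent to the condition ``$x \in \sum(N:x)M$'' appearing in the definition of semiprime submodule, after which the statement follows immediately by unwinding that definition. So the proof reduces to the claim: for $x\in M$, one has $x \in \sum(N:x)M$ if and only if $\omega_i(x) \in (N:x)$ for every $i\in\{1,\dots,d\}$.

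For the ``if'' direction, suppose $\omega_i(x) \in (N:x)$ for all $i$. Using the dual basis identity $x = \sum_{i=1}^d \omega_i(x) v_i$, each summand $\omega_i(x)v_i$ lies in $(N:x)M$ since $\omega_i(x)\in (N:x)$, and therefore $x \in \sum (N:x)M$ (the latter being the $R$-submodule of $M$ generated by $\{rm \mid r\in(N:x),\, m\in M\}$, which contains each $\omega_i(x)v_i$). For the ``only if'' direction, suppose $x = \sum_{k} r_k m_k$ with $r_k \in (N:x)$ and $m_k \in M$. Apply the $R$-linear functional $\omega_i$ to both sides: $\omega_i(x) = \sum_k r_k\,\omega_i(m_k)$. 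Since $(N:x)$ is an ideal of $R$ and each $r_k\in(N:x)$, the right-hand side lies in $(N:x)$, hence $\omega_i(x)\in(N:x)$ for every $i$. This proves the claim, and with it the lemma, since $N$ semiprime means precisely that $x\in\sum(N:x)M$ forces $x\in N$, which by the claim is the same as saying $\omega_i(x)\in(N:x)$ for all $i$ forces $x\in N$.

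I do not expect any serious obstacle here; the only point requiring a little care is making sure the notation $\sum(N:x)M$ is read correctly as the submodule generated by products, and that the equivalence of the two membership conditions genuinely holds for the \emph{same} element $x$ on both sides (the ideal $(N:x)$ depends on $x$, but $x$ is fixed throughout the argument, so this causes no circularity). The projectivity of $M$ enters only through the existence of a dual basis, which is assumed in the statement; no further structure of $M$ is needed.
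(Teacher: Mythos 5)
Your proposal is correct and follows essentially the same argument as the paper: one direction uses the dual basis identity $x=\sum_{i=1}^d\omega_i(x)v_i$ to place $x$ in $\sum(N:x)M$, and the other applies each $\omega_i$ to an expression $x=\sum_k r_k m_k$ and uses that $(N:x)$ is an ideal. Packaging the two directions as an equivalence of the membership conditions is merely a presentational difference, not a different proof.
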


\begin{proof} 
  Suppose that $N$ is a semiprime submodule of $M$. Pick $x \in M$ such that $\omega_i(x) \in (N:x)$ for all $i\in\{1,\dots,d\}$.
  Then it follows that $x=\sum_{i=1}^d \omega_i(x) v_i \in \sum(N:x)M$. Since $N$ is semiprime, this implies $x \in N$.

  Conversely, consider a submodule $N$ of $M$ such that for any $x \in M$ it follows from $\omega_i(x) \in (N:x)$ for all $i\in \{1,\dots,d\}$ that $x \in N$.
  To prove that $N$ is semiprime, take any $x = \sum_{j=1}^k r_j m_j \in M$ with $k\in \NN_0$ and $r_j \in (N:x)$, $m_j \in M$ for $j\in\{1,\ldots,k\}$.
  Since $(N:x)$ is an ideal in $R$ we have $\omega_i(x)=\sum_{j=1}^k r_j \omega_i(m_j) \in (N:x)$ for all $i\in \{1,\dots,d\}$,
  so $x \in N$ by assumption.
\end{proof}

\begin{proposition} \label{proposition:semiprime2prime}
  Let the ring $\A$ be an Azumaya algebra over its center $R \coloneqq \zentrum(\A)$.
  Then  every semiprime left ideal of $\A$ is an intersection of prime left ideals of $\A$.
\end{proposition}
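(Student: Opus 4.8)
The plan is to transfer the problem from left ideals of $\A$ to submodules of the $R$-module $M \coloneqq \A$, where the notion of semiprime submodule together with \cite[Thm.~2.6]{aryapoor} gives an intersection-of-prime-submodules decomposition, and then to pull this back. Concretely, given a semiprime left ideal $I$ of $\A$, the first step is to show that $I$ is a semiprime $R$-submodule of $M = \A$ in the sense of \cite[Def.~2.1]{aryapoor}. Here the key tool is Lemma~\ref{lem:altsemiprime}: it suffices to check that whenever $x\in \A$ satisfies $\omega_i(x) \in (I:x)$ for every $i\in\{1,\dots,d\}$ (with $(v_i,\omega_i)$ a fixed dual basis of the finitely generated projective $R$-module $\A$), then $x \in I$. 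To verify this I would use Lemma~\ref{lemma:Xi}: write $\omega_i(x) = \sum_{\alpha} e_i^\alpha x f_i^\alpha$. The hypothesis $\omega_i(x) \in (I:x)$ means $\omega_i(x)\, x \in I$, i.e.\ $\sum_\alpha e_i^\alpha x f_i^\alpha x \in I$ for each $i$. Summing $v_i$ times these (and using $\sum_i \omega_i(x) v_i = x$, so $\sum_i \omega_i(x)\,x\,(\text{something}) $ reconstructs $x \A x$-type expressions), I expect to deduce $x \A x \subseteq I$, and then semiprimeness of the left ideal $I$ gives $x \in I$. The precise bookkeeping is that for arbitrary $c\in\A$ one has $x c x = \sum_i \omega_i(xc)\, v_i\, x$ — or more cleanly $xcx = \sum_i \omega_i(xc) v_i x$ is not quite it; rather one uses $x\A x \subseteq I$ via $\omega_i(xcx)\in \ldots$. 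This reduction of $x\A x \subseteq I$ from the finitely many conditions $\omega_i(x)x \in I$ is the main obstacle and the technical heart of the argument; it is where the Azumaya structure (surjectivity of $\Xi_\A$) is genuinely used.

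Once $I$ is known to be a semiprime $R$-submodule of $\A$, \cite[Thm.~2.6]{aryapoor} yields $I = \bigcap_{\lambda} N_\lambda$ where each $N_\lambda$ is a prime $R$-submodule of $\A$. The second step is to replace each $N_\lambda$ by a prime \emph{left ideal} of $\A$ contained between $I$ and $N_\lambda$, without losing the intersection. For this I would define, for each $\lambda$, the left ideal $L_\lambda \coloneqq \A N_\lambda$ generated by $N_\lambda$ inside $\A$ — but this may fail to be contained in $N_\lambda$. A cleaner route: for each $\lambda$ set $L_\lambda \coloneqq \{a \in \A \mid \A a \subseteq N_\lambda\}$ or work with the core. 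Actually the most natural candidate is to show directly that each prime submodule $N_\lambda$ that occurs can be taken to be a left ideal: a prime submodule $N$ of $\A$ has associated prime ideal $\mathfrak p \coloneqq (N:\A) = \{r\in R\mid r\A \subseteq N\}$ of $R$, and since $\A$ is Azumaya over $R$, $N \supseteq \mathfrak p \A$ and $\A/\mathfrak p\A$ is an Azumaya algebra over the domain $R/\mathfrak p$; primeness of the submodule $N/\mathfrak p\A$ forces it to be of a controlled form, and I expect that after localizing at $\mathfrak p$ (using that $\A \otimes_R R_\mathfrak p$ has no nontrivial ideals by Azumaya) the submodule $N_\lambda$ is forced to be a left ideal, or can be enlarged to a prime left ideal still contained in the relevant intersection term. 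The cleanest formulation is likely: replace the family $\{N_\lambda\}$ by $\{(N_\lambda : \A)\cdot\A + I\text{-saturations}\}$ — I would check that $\bigcap_\lambda \big( \text{largest left ideal inside } N_\lambda \text{ containing } I\big)$ still equals $I$ and each term is prime.

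So the overall skeleton is: (1) use Lemma~\ref{lemma:Xi} and Lemma~\ref{lem:altsemiprime} to promote the semiprime left ideal $I$ to a semiprime $R$-submodule of $\A$; (2) invoke \cite[Thm.~2.6]{aryapoor} to write it as an intersection of prime $R$-submodules $N_\lambda$; (3) show each $N_\lambda$ (or a suitable prime left ideal sandwiched between $I$ and $N_\lambda$) is in fact a prime left ideal, using that $\A$ Azumaya over $R$ makes two-sided-ideal and $R$-submodule structure very rigid — ideals of $\A$ correspond to ideals of $R$, and prime $R$-submodules of $\A$ are controlled by prime ideals of $R$ together with the simple-module structure of $\A/\mathfrak p\A$, which is central simple over $\mathrm{Frac}(R/\mathfrak p)$. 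I anticipate step (3) will require the analogue for submodules of the correspondence in Lemma~\ref{lemma:homomorphicimage}, applied to $\A \twoheadrightarrow \A/\mathfrak p\A$, together with Proposition~\ref{proposition:genI}/Theorem~\ref{theorem:Ibot} machinery to recognize prime left ideals of the Azumaya algebra $\A/\mathfrak p\A$. The main obstacle, as noted, is the passage from the finitely many scalar conditions $\omega_i(x)x\in I$ to $x\A x \subseteq I$ in step (1); everything after that is structural.
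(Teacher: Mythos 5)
Your skeleton is the same as the paper's (promote the semiprime left ideal $I$ to a semiprime $R$-submodule of $\A$, apply \cite[Thm.~2.6]{aryapoor}, then pass back to prime left ideals), but the two points you yourself flag as unresolved are precisely the substance of the proof, and one of the routes you propose for closing them would fail. For step (1) there is no obstacle and no use of Lemma~\ref{lemma:Xi} at all: if $\omega_i(x)x\in I$ for all $i\in\{1,\dots,d\}$, then for every $a\in\A$ one has $xax=\sum_{i=1}^d\omega_i(x)\,v_i\,a\,x=\sum_{i=1}^d v_i\,a\,\omega_i(x)x\in I$, because each $\omega_i(x)$ lies in the center $R$ and $I$ is a left ideal containing $\omega_i(x)x$; semiprimeness of $I$ as a \emph{left ideal} then gives $x\in I$, and Lemma~\ref{lem:altsemiprime} does the rest. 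This uses only the dual basis, i.e.\ that $\A$ is finitely generated projective over $R$ -- you have misplaced where the Azumaya hypothesis is genuinely needed.

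The real use of surjectivity of $\Xi_\A$ (Lemma~\ref{lemma:Xi}) is in the step you leave as speculation: turning a prime submodule $N$ with $I\subseteq N$ and $c\notin N$ into a prime left ideal separating $c$ from $I$. Your preferred route, that $N$ can be taken or forced (after localizing at $(N:\A)$) to be a left ideal, is false in general: already for $\A=\Mat_k(\FF)$ over $R=\FF$ every $\FF$-linear subspace of $\A$ is a prime $R$-submodule, so prime submodules need have no left ideal structure whatsoever; and enlarging $N$ would destroy the separation from $c$. The candidate you mention only in passing, $J\coloneqq\{x\in\A\mid\A x\subseteq N\}$, is the correct one: it is a left ideal with $I\subseteq J\subseteq N$ (so $c\notin J$), and its primeness is exactly where \eqref{eq:Xi} enters -- if $xax'\in J$ for all $a\in\A$ while $a'x'\notin N$ for some $a'$, then $\omega_i(x)a'x'=\sum_{\alpha=1}^\rho e_i^\alpha x f_i^\alpha a'x'\in J\subseteq N$ for each $i$, so primeness of $N$ forces $\omega_i(x)\A\subseteq N$ for all $i$, whence $ax=\sum_{i=1}^d\omega_i(x)av_i\in N$ for all $a\in\A$, i.e.\ $x\in J$. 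Without this argument (or an equivalent), your proposal does not establish the proposition; the localization and ideal-correspondence machinery you invoke instead is neither needed nor developed far enough to substitute for it.
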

\begin{proof}
  Let $I$ be a semiprime left ideal of $\A$ and consider an arbitrary element $c\in \A \setminus I$. In order to complete
  the proof we construct a prime left ideal $J$ of $\A$ that fulfils $I \subseteq J$ and $c\notin J$.

  Let  $(v_i,\omega_i)_{i=1,\ldots,d}$ be a dual basis of $\A$ considered as an $R$-module.
  The first step is to show that $I$ is also semiprime as a submodule of $\A$.
  So assume that $x \in \A$ satisfies $\omega_i(x) \in (I:x)$ for all $i\in\{1,\ldots,d\}$.
  Note that $\omega_i(x) \in (I:x)$ means that $\omega_i(x)x \in I$.
  Then $x a x = \sum_{i=1}^d \omega_i(x) v_i a x = \sum_{i=1}^d v_i a \, \omega_i(x)x \in I$ for all $a\in \A$, which implies $x \in I$
  because $I$ is a semiprime left ideal of $\A$. By Lemma \ref{lem:altsemiprime} this means that $I$ is indeed a semiprime submodule of $\A$.

  By \cite[Thm.~2.6]{aryapoor} there exists a prime $R$-submodule $N$ of $\A$
  such that $I \subseteq N$ and $c \notin N$. Set $J \coloneqq \{ x \in \A \mid \A x \subseteq N \}$.
  It is easy to check that $J$ is a left ideal of $N$ and $J\subseteq N$ because $\A$ has a unit. In particular $c\notin J$.
  Moreover, $I \subseteq J$ because $\A x \subseteq I \subseteq N$ for all $x\in I$. It only remains to show
  that $J$ is prime as a left ideal of $\A$:
  
  Let $\rho \in \NN$ and $e_i^\alpha, f_i^\alpha \in \A$ for $i\in \{1,\dots,d\}$, $\alpha \in \{1,\dots,\rho\}$ fulfil
  formula \eqref{eq:Xi} of Lemma \ref{lemma:Xi}.
  Pick any $x \in \A$ and $x' \in \A \setminus J$ such that $xax' \in J$ for all $a \in \A$.
  As $x' \notin J$ there is $a' \in \A$ such that $a' x' \notin N$.
  On the other hand, for each $i\in\{1,\ldots,d\}$ we have
  $\omega_i(x)a'x'=\sum_{\alpha = 1}^\rho e_i^\alpha x f_i^\alpha a' x'  \in J \subseteq N$.
  Since $N$ is a prime $R$-submodule of $\A$ and $a'x' \notin N$ we have $\omega_i(x)\A \subseteq N$ for all $i\in\{1,\ldots,d\}$.
  It follows that $ax=a\sum_{i=1}^d \omega_i(x) v_i = \sum_{i=1}^d \omega_i(x) a v_i\in N$ for all $a\in \A$, so $x\in J$.
\end{proof}

The following technical Lemma will be needed in the proofs of 
Theorem \ref{theorem:stronglyLeftJacobson} and Proposition \ref{proposition:findem}.

\begin{lemma}
\label{lemma:PI}
Let $\A$ be a ring which is finitely generated as a module over its center.
Then every left-primitive ideal of $\A$ is maximal. In particular,
if $\maxLId$ is a maximal left ideal of $\A$, then the (two-sided)
      ideal $[\maxLId : \A]$ of $\A$ from \eqref{eq:quotient} is maximal.
\end{lemma}

\begin{proof}
  Let $\maxLId$ be a maximal left ideal of $\A$.
  Note that $[\maxLId : \A]$ is the annihilator of the simple left $\A$-module $\A / \maxLId$, so
  $[\maxLId : \A]$ is left-primitive. Since $\A$ is a PI ring by \cite[Lemma~1.19]{formanek},
  every left-primitive ideal of $\A$ is maximal by Kaplansky's Theorem \cite[Thm.~4.13]{formanek};
  in particular $[\maxLId : \A]$ is a maximal ideal of $\A$.
\end{proof}

By \cite[Cor.~3.7 of Chap.~II]{demeyer} or \cite[Cor.~7.1.7]{ford} there is a bijective correspondence between the (two-sided) ideals of
an Azumaya algebra $\A$ and the ideals of its center $R \coloneqq \zentrum(\A)$. It is given by
\begin{equation}
  \label{eq:correspondenceOfIdeals}
  I \mapsto I \cap R \quad\quad\text{and}\quad\quad J \mapsto {\textstyle\sum} J \A
\end{equation}
for (two-sided) ideals $I$ of $\A$ and ideals $J$ of $R$, respectively.
It is clear that the relation $\subseteq$ is preserved in both directions, and consequently also maximal ideals are preserved.
As observed in \cite[Thm.~8.17(b)]{formanek} the correspondence also preserves prime ideals.

\begin{theorem}\label{theorem:stronglyLeftJacobson}
  Let $\A$ be an Azumaya algebra over its center $R \coloneqq \zentrum(\A)$.
  Then $\A$ is strongly left Jacobson if and only if $R$ is Jacobson.
\end{theorem}
\begin{proof}
  By the correspondence \eqref{eq:correspondenceOfIdeals}, $R$ is Jacobson (i.e.\ every prime ideal of $R$ is an intersection
  of maximal ideals of $R$) if and only if every prime ideal of $\A$ is an intersection of maximal ideals of $\A$.
  The latter in turn is true if and only if every prime ideal of $\A$ is an intersection of left-primitive ideals
  (because the left-primitive ideals in $\A$ coincide with the maximal ideals by Lemma \ref{lemma:PI}).
  By definition this is true if and only if $\A$ is Jacobson.
  By Corollary \ref{corollary:Ibot} and Proposition \ref{proposition:semiprime2prime},
  $\A$ is Jacobson if and only if $\A$ is strongly left Jacobson.
\end{proof}

We also make the following observation:

\begin{proposition}
\label{proposition:findem}
  Let $\A$ be an Azumaya algebra over its center $R \coloneqq \zentrum(\A)$
  and let $\maxLId$ be a maximal left ideal of $\A$. 
 Then $\maxLId \cap R$ is a maximal ideal of $R$ and 
$\A / \maxLId$ is a finite-dimensional vector space over $R/(\maxLId \cap R)$.
\end{proposition}

\begin{proof}
Since $\maxLId$ is a maximal left ideal of $\A$,
 $[\maxLId : \A]$ is a maximal  two-sided ideal of $\A$ by Lemma \ref{lemma:PI}.
   Since the correspondence \eqref{eq:correspondenceOfIdeals} preserves maximal ideals,
    $[\maxLId : \A] \cap R$ is a maximal ideal of $R$.
  By Proposition \ref{proposition:sim}, $[\maxLId : \A] \cap R = \maxLId \cap R$
  which implies that  $\maxLId \cap R$ is a maximal ideal of $R$.
    As $\A$ is a finitely generated module over $R$, the dimension of $\A / \maxLId$
    over the field $R/(\maxLId \cap R)$ is finite.
\end{proof}

\section{Polynomials with coefficients in algebras}
\label{sec6}

The aim of this section is to show that every $\FF$-algebra of the form 
$\A[x_1,\ldots,x_n]\coloneqq \A \otimes_\FF \FF[x_1,\dots,x_n]$, where $\A$ is a
finite-dimensional algebra over a field $\FF$ and $x_1,\ldots,x_n$ are central variables, 
satisfies the left Nullstellensatz; see Theorem  \ref{theorem:algebraValuedPolynomials}.
The proof consists of three steps.
For simple $\A$  we use Theorem~\ref{theorem:stronglyLeftJacobson}, then we prove 
 it for semisimple $\A$ and finally for  general $\A$. 
  In parallel we also prove that every maximal left ideal of $\A[x_1,\ldots,x_n]$
 has a special form, see Proposition \ref{proposition:Jxiv}.
 The  cases $\A=\HH$ and $\A=M_k(\CC)$ were already discussed in \cite[Theorems 1.2 and 1.4]{c3}.

%In order to simplify our notation we identify $\A$ with the subalgebra $\A \otimes 1$ of $\A[x_1,\dots,x_n]$.
If $\A$ is a finite-dimensional and simple $\FF$-algebra, then its center $\EE \coloneqq \zentrum(\A)$ is a finite (hence algebraic) field
extension of $\FF$. Write $\bar \FF$ for the algebraic closure of $\FF$, then $\bar \FF$ is also the algebraic closure
of $\EE$, so by \cite[Prop.~4.5.1.]{ford}, there exists $k\in \NN$ such that $\A \otimes_\EE \bar \FF \cong \Mat_k(\bar\FF)$
where $\Mat_k(\bar\FF)$ denotes the $\bar\FF$-algebra of square matrices of size $k$.

For $\xi \in \bar \FF^n$ an evaluation map $\A[x_1,\dots,x_n] \ni a \mapsto a(\xi) \in \Mat_k(\bar\FF)$
can be defined by
\begin{equation}
 a(\xi) \coloneqq \sum_\nu a_\nu \xi^\nu \in \A \otimes_\EE \bar \FF \cong \Mat_k(\bar\FF)
\end{equation}
for all $a = \sum_\nu a_\nu x^\nu \in \A[x_1, \dots, x_n]$ with coefficients $a_\nu\in \A$ (using standard
multiindex notation). This evaluation map is clearly is an $\EE$-algebra morphism, and therefore
\begin{equation}
  J_{\xi,v} \coloneqq \bigl\{ a\in \A[x_1,\dots,x_n] \mid a(\xi) v = 0 \bigr\}
\end{equation}
is a left ideal of $\A[x_1,\dots,x_n]$ for every $v\in \bar\FF^k$. Note that $1 \in J_{\xi,v}$ if and only if $v=0$.

\begin{proposition} \label{proposition:simple}
  Let $\FF$ be a field, $\A$ a finite-dimensional simple $\FF$-algebra, and $n\in \NN$.
  Then $\A[x_1,\dots,x_n]$ is strongly left Jacobson, every maximal left ideal $\maxLId$ of $\A$
  has finite codimension over $\FF$, and there exist a point $\xi \in \bar \FF^n$ and 
  a vector $v\in \bar \FF^k \setminus \{0\}$   such that $\maxLId = J_{\xi,v}$.
\end{proposition}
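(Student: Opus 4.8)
The plan is to deduce Proposition~\ref{proposition:simple} from Theorem~\ref{theorem:stronglyLeftJacobson} by exhibiting $\A[x_1,\dots,x_n]$ as an Azumaya algebra over a commutative Jacobson ring. First I would identify the center: since $\A$ is finite-dimensional simple over $\FF$ with center $\EE \coloneqq \zentrum(\A)$, the algebra $\A$ is an Azumaya $\EE$-algebra (a finite-dimensional simple algebra is central simple over its center, hence Azumaya). Base-changing along $\EE \hookrightarrow \EE[x_1,\dots,x_n]$ preserves the Azumaya property, so $\A[x_1,\dots,x_n] = \A \otimes_\EE \EE[x_1,\dots,x_n]$ is an Azumaya algebra over $\EE[x_1,\dots,x_n]$, and one checks that its center is exactly $\EE[x_1,\dots,x_n]$ (the center of a base change of an Azumaya algebra is the base ring). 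Since $\EE$ is a finite field extension of $\FF$, the ring $\EE[x_1,\dots,x_n]$ is a finitely generated commutative $\FF$-algebra, hence Jacobson by \cite{og} or \cite[Thm.~4.9]{eisenbud}. Theorem~\ref{theorem:stronglyLeftJacobson}\refitem{item:stronglyLeftJacobson:Jacobson} then gives that $\A[x_1,\dots,x_n]$ is strongly left Jacobson, and part~\refitem{item:stronglyLeftJacobson:maximal} gives that $[\maxLId : \A[x_1,\dots,x_n]]$ is maximal and $\maxLId \cap \EE[x_1,\dots,x_n]$ is a maximal ideal of $\EE[x_1,\dots,x_n]$ for any maximal left ideal $\maxLId$.

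The finite-codimension claim then follows from Remark~\ref{rem:findem}: the simple module $\A[x_1,\dots,x_n]/\maxLId$ is a vector space over the residue field $\EE[x_1,\dots,x_n]/(\maxLId \cap \EE[x_1,\dots,x_n])$, which is a finite extension of $\FF$ by Hilbert's Nullstellensatz, and it is finitely generated over that field since $\A[x_1,\dots,x_n]$ is; hence it is finite-dimensional over $\FF$.

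It remains to produce the point $\xi$ and vector $v$ with $\maxLId = J_{\xi,v}$. The idea is: the maximal ideal $\maxId \coloneqq \maxLId \cap \EE[x_1,\dots,x_n]$ has residue field $\LL \coloneqq \EE[x_1,\dots,x_n]/\maxId$, a finite extension of $\EE$, so there is an $\EE$-algebra embedding $\LL \hookrightarrow \bar\FF$, which amounts to a point $\xi \in \bar\FF^n$ together with a compatible embedding of $\EE$ into $\bar\FF$; the evaluation map $\A[x_1,\dots,x_n]\ni a \mapsto a(\xi) \in \A\otimes_\EE\bar\FF \cong \Mat_k(\bar\FF)$ has kernel containing $[\maxLId:\A[x_1,\dots,x_n]] = \sum \maxId\, \A[x_1,\dots,x_n]$ (the correspondence~\eqref{eq:correspondenceOfIdeals} applied to $\maxId$), and since both $[\maxLId:\A[x_1,\dots,x_n]]$ and $\ker$ of the evaluation map are maximal two-sided ideals of $\A[x_1,\dots,x_n]$, they coincide. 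Passing to the quotient, $\maxLId/[\maxLId:\A[x_1,\dots,x_n]]$ becomes a maximal left ideal of $\Mat_k(\bar\FF)$ (after extending scalars $\LL \to \bar\FF$, using that the simple module is absolutely simple so extension of scalars behaves well), hence of the form $\{N \mid Nv = 0\}$ for some $v \in \bar\FF^k\setminus\{0\}$; unwinding, $\maxLId = J_{\xi,v}$.

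The main obstacle I expect is the bookkeeping in this last step: one must be careful that the residue field $\LL$ may be a proper extension of $\EE$ (not just of $\FF$), so ``evaluating at $\xi \in \bar\FF^n$'' implicitly fixes an embedding $\EE \hookrightarrow \bar\FF$, and one needs $\A \otimes_\EE \bar\FF \cong \Mat_k(\bar\FF)$ with respect to that embedding — this is exactly the isomorphism set up before the proposition, so it is available. The other delicate point is that the maximal left ideal of $\Mat_k(\LL)$ coming from $\maxLId$ need not be ``column-type'' over $\LL$ unless the corresponding simple module has dimension $k$ over $\LL$; but since $\A \otimes_\EE \bar\FF$ is split, the simple $\A\otimes_\EE\LL$-module becomes the standard $k$-dimensional module after base change to $\bar\FF$, and every maximal left ideal of $\Mat_k(\bar\FF)$ is the annihilator of a nonzero vector — so choosing $v \in \bar\FF^k$ in the base-changed picture resolves this. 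Everything else is routine verification that the stated evaluation ideal $J_{\xi,v}$ really equals $\maxLId$, which follows once the two maximal two-sided ideals are matched and the vector is chosen.
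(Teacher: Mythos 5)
Up to the last step your route is the paper's: you identify the center $\EE[x_1,\dots,x_n]$, observe that $\A[x_1,\dots,x_n]\cong\A\otimes_\EE\EE[x_1,\dots,x_n]$ is Azumaya, invoke Theorem~\ref{theorem:stronglyLeftJacobson} together with Hilbert's Nullstellensatz and Remark~\ref{rem:findem}, embed the residue field $\KK$ of $\maxLId\cap\EE[x_1,\dots,x_n]$ into $\bar\FF$ to obtain $\xi$, and identify $\bigl[\maxLId:\A[x_1,\dots,x_n]\bigr]$ with the kernel of evaluation at $\xi$; all of this matches the paper and is fine. The gap is in how you produce $v$. The quotient $\A[x_1,\dots,x_n]/\bigl[\maxLId:\A[x_1,\dots,x_n]\bigr]$ is the image of the evaluation map, i.e.\ the central simple $\KK$-algebra $\A\otimes_\EE\KK$, and this algebra need not be split. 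In the non-split case its unique simple module $S$ is \emph{not} absolutely simple ($\End(S)$ is the opposite of the underlying division algebra, not $\KK$), $S\otimes_\KK\bar\FF$ is a direct sum of $d>1$ copies of $\bar\FF^k$ rather than the standard module, and the scalar extension of the maximal left ideal $\maxLId/\bigl[\maxLId:\A[x_1,\dots,x_n]\bigr]$ to $\Mat_k(\bar\FF)$ is \emph{not} a maximal left ideal. Concretely, take $\A=\HH$, $\FF=\mathbb{R}$ and $\maxLId=\sum_{i=1}^n x_i\,\HH[x_1,\dots,x_n]$: here $\KK=\mathbb{R}$, the quotient is $\HH$, the image of $\maxLId$ in it is $\{0\}$, and its extension to $\Mat_2(\CC)$ is $\{0\}$, far from maximal. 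So the two statements your construction of $v$ rests on fail exactly in the quaternionic situation that motivates the paper, and as written the final step does not go through.

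The gap is repairable because you only need a nonzero $v\in\bar\FF^k$ with $\maxLId\subseteq J_{\xi,v}$: since $v\neq 0$ gives $1\notin J_{\xi,v}$, maximality of $\maxLId$ then forces equality. One fix in the spirit of your argument: write the image of $\maxLId$ in $\A\otimes_\EE\KK$ as $\ann_S(m)$ for a nonzero $m$ in the simple module $S$, base change to $\bar\FF$, and let $v$ be a nonzero component of $m\otimes 1$ in one of the copies of $\bar\FF^k$; then $\maxLId\subseteq J_{\xi,v}$, which suffices. The paper instead avoids modules here: by Theorem~\ref{theorem:Ibot}, $\maxLId=\bigl\{a \mid a\,\maxLId^\perp\subseteq\bigl[\maxLId:\A[x_1,\dots,x_n]\bigr]\bigr\}$, so from $1\notin\maxLId$ one gets $b\in\maxLId^\perp$ with $b(\xi)\neq 0$, and any nonzero column $v$ of $b(\xi)$ satisfies $a(\xi)v=0$ for all $a\in\maxLId$, i.e.\ $\maxLId\subseteq J_{\xi,v}$, after which maximality finishes the proof.
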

\begin{proof}
  Let $\EE$ be the center of $\A$ so that the center of $\A[x_1,\dots,x_n]$ is $R \coloneqq \zentrum\bigl( \A[x_1,\dots,x_n] \bigr) = \EE[x_1,\ldots,x_n]$.
  Since $\A$ is central simple over $\EE$ it is also Azumaya by \cite[Cor.~4.5.4.]{ford}. By \cite[Cor.~4.3.2]{ford}
  it follows that $\A[x_1,\dots,x_n] \cong \A \otimes_\EE R$ is Azumaya over $R$.
  By Hilbert's Nullstellensatz, the polynomial ring $R$
  is Jacobson (see e.g. \cite[Thm.~4.9]{eisenbud} or \cite[Cor.~on p.~138]{og})
  and every maximal ideal of $R$ has finite codimension over $\EE$ 
  (see e.g \cite[Cor.~7.10]{atiyah} or \cite[Cor.~on p.~139]{og}).
Theorem~\ref{theorem:stronglyLeftJacobson} now shows that $\A[x_1,\dots,x_n]$ is strongly left Jacobson.

  Moreover, if $\maxLId$ is a maximal left ideal of $\A[x_1,\dots,x_n]$, then by Proposition \ref{proposition:findem},
  $\maxLId \cap R$ is a maximal ideal of $R$ and by Hilbert's Nullstellensatz it
   has finite codimension over $\EE$ in $R$.
  Write $\KK \coloneqq R/(\maxLId \cap R)$, then $\KK$ is a finite field extension of $\EE$.
  By Proposition \ref{proposition:findem}, $\A[x_1,\dots,x_n]/\maxLId$ is finite dimensional over $\KK$ and therefore
  \begin{align*}
    \dim_\FF \A[x_1,\dots,x_n]/\maxLId = \dim_\FF  \EE \, \dim_\EE \KK \, \dim_\KK \A[x_1,\dots,x_n]/\maxLId  < \infty.
   \end{align*}

    As $\KK$ is a finite (hence algebraic) extension of $\FF$, it embedds into the algebraic closure $\bar\FF$.
  Set $\xi \coloneqq (\xi_1, \dots, \xi_n) \coloneqq \bigl( [x_1], \dots, [x_n] \bigr) \in \KK^n \subseteq \bar\FF^n$
  where $[\argument] \colon R \to \KK$ is the canonical projection onto the quotient, which is an $\EE$-algebra morphism.
  Then $\maxLId \cap R = \ker [\argument] = \{ p\in R \mid p(\xi)=0 \}$ because $p(\xi) = [p(x_1,\dots,x_n)] = [p]$ for all $p \in R$.

  We claim that the two-sided ideal $\maxId  := [\maxLId : A[x_1,\dots,x_n]]$
  is equal to $ \{ a \in \A[x_1,\dots,x_n] \mid a(\xi) = 0 \}$.
  As $\A[x_1,\dots,x_n]$ is Azumaya over $R$  and  $\maxId \cap R = \maxLId \cap R$
  we have   $\maxId = \sum(\maxLId \cap R)\A[x_1,\dots,x_n]$ by the correspondence 
   \eqref{eq:correspondenceOfIdeals}. Therefore
  $\maxId \subseteq \{ a \in \A[x_1,\dots,x_n] \mid  a(\xi) = 0 \}$.
  As $\maxId$ is a maximal two-sided ideal by Lemma \ref{lemma:PI}, we have equality.
 % (Note that the right-hand side is a two-sided ideal.)
  
  Write $\maxLId^\bot \coloneqq \{ b\in \A[x_1,\dots,x_n] \mid \maxLId\, b \subseteq \maxId \}$ like in Theorem~\ref{theorem:Ibot},
  then $\maxLId = \{ a\in \A[x_1,\dots,x_n] \mid a \,\maxLId^\bot \subseteq \maxId \}$, or equivalently,
  \begin{equation*}
    \maxLId = \{ a\in \A[x_1,\dots,x_n] \mid a(\xi)\, b(\xi)= 0~\textup{for all $b\in \maxLId^\bot$} \}
    .
  \end{equation*}
  As $1 \notin \maxLId$ there is $b\in \maxLId^\bot$ such that $b(\xi) \neq 0$.
  Let $v\in \bar\FF^k \setminus \{0\}$ be any non-zero column of $b(\xi) \in \Mat_k(\bar\FF)$,
  then $a(\xi) v = 0$ for all $a\in \A[x_1,\dots,x_n]$, so $\maxLId \subseteq J_{\xi,v}$.
  By maximality of $\maxLId$ it follows that $\maxLId = J_{\xi,v}$.
\end{proof}

\begin{example}
Note, however, that not every left ideal of $\A[x_1,\dots,x_n]$ of the form $J_{\xi,v}$ is maximal.
Namely, pick $\A =M_2(\QQ)$, $\xi=(0,\ldots,0) \in \QQ^n$ and $v=[ 1 \ \sqrt{2}]^T \in \bar{\QQ}^2$.
Since $\sqrt{2}$ is irrational, $a(\xi)v=0$ is equivalent to $a(\xi)=0$ for every $a \in \A[x_1,\ldots,x_n]$.
To show that $J_{\xi,v}$ is not a maximal left ideal pick any $0 \ne u \in \QQ^2$ 
and note that $a(\xi)u=0$ need not imply $a(\xi)=0$.
Therefore $J_{\xi,v} \subsetneq J_{\xi,u} \subsetneq \A[x_1,\dots,x_n]$. 
\end{example}

\begin{proposition} \label{proposition:directsum}
  Consider rings $\A_1,\ldots,\A_k$ with $k\in \NN$ and let $\A \coloneqq \A_1 \oplus \dots \oplus \A_k$.
  For any $j \in \{1,\dots,k\}$ and any left ideal $I_j$ of $\A_j$ we write
  \begin{equation}
    \label{eq:directsum:hat}
    \hat I_j \coloneqq \A_1 \oplus \dots \oplus \A_{j-1} \oplus I_j \oplus \A_{j+1} \oplus \dots \oplus \A_k \subseteq \A
    .
  \end{equation}
  Then a left ideal $\maxLId$ of $\A$ is maximal if and only if there exist $j\in \{1,\dots,k\}$ and a maximal ideal $\maxLId_j$ of $\A_j$
  such that $\maxLId = \hat \maxLId_j$.
  Moreover, if all $\A_1, \dots, \A_k$ are strongly left Jacobson, then $\A$ is strongly left Jacobson.
\end{proposition}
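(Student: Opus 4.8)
The plan is to reduce every assertion to the corresponding statement about the individual factors $\A_j$, using the central idempotents $e_1,\dots,e_k \in \A$, where $e_j$ carries $1$ in the $j$-th coordinate and $0$ elsewhere and $\sum_{j=1}^k e_j = 1$. The first observation I would record is the elementary fact that every left ideal $I$ of $\A$ decomposes as $I = \bigoplus_{j=1}^k e_j I$ with each $e_j I = I \cap e_j\A$ a left ideal of $e_j \A \cong \A_j$, and that conversely every $I_1 \oplus \dots \oplus I_k$ with $I_j$ a left ideal of $\A_j$ is a left ideal of $\A$. This yields an inclusion-preserving bijection between left ideals of $\A$ and tuples $(I_1,\dots,I_k)$ of left ideals $I_j \subseteq \A_j$, under which the ideal $\hat I_j$ from \eqref{eq:directsum:hat} corresponds to the tuple $(\A_1,\dots,\A_{j-1},I_j,\A_{j+1},\dots,\A_k)$.

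For the characterization of maximal left ideals I would write $\maxLId = I_1 \oplus \dots \oplus I_k$ and use the isomorphism of left $\A$-modules $\A/\maxLId \cong \bigoplus_{j=1}^k (\A_j/I_j)$ in which $\A_i$ acts by zero on the $j$-th summand whenever $i \neq j$. Again because the $e_j$ are central idempotents, every $\A$-submodule of the right-hand side splits as $\bigoplus_j N_j$ with $N_j$ a submodule of $\A_j/I_j$; hence $\A/\maxLId$ is simple exactly when one of the quotients $\A_j/I_j$ is simple and all the others vanish, i.e.\ when $I_i = \A_i$ for all $i \neq j$ and $I_j$ is a maximal left ideal of $\A_j$. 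This is precisely the condition $\maxLId = \hat\maxLId_j$, and the same computation shows that $\hat\maxLId_j$ is a maximal left ideal of $\A$ whenever $\maxLId_j$ is a maximal left ideal of $\A_j$.

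For the ``moreover'' part I would assume that all $\A_j$ are strongly left Jacobson and take a semiprime left ideal $I = I_1 \oplus \dots \oplus I_k$ of $\A$. The first step is to check that each $I_j$ is a semiprime left ideal of $\A_j$: if $a_j \in \A_j$ satisfies $a_j \A_j a_j \subseteq I_j$, then the element $a\in \A$ whose $j$-th coordinate is $a_j$ and whose other coordinates vanish satisfies $a\A a \subseteq I$, so $a \in I$ by semiprimeness of $I$, and therefore $a_j \in I_j$. Since $\A_j$ is strongly left Jacobson, $I_j = \bigcap_{M \in \mathcal{M}_j} M$, where $\mathcal{M}_j$ is the set of maximal left ideals of $\A_j$ containing $I_j$ (with the empty intersection read as $\A_j$ when $I_j = \A_j$). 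Consequently $\hat I_j = \bigcap_{M \in \mathcal{M}_j} \hat M$, every $\hat M$ being a maximal left ideal of $\A$ by the first part of the proposition, and hence
\[
  I = \bigcap_{j=1}^k \hat I_j = \bigcap_{j=1}^k \bigcap_{M \in \mathcal{M}_j} \hat M
\]
is an intersection of maximal left ideals of $\A$, so $\A$ is strongly left Jacobson.

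I do not expect a serious obstacle: the whole argument rests on the two structural observations that left ideals of $\A$ and $\A$-submodules of $\bigoplus_j (\A_j/I_j)$ both split along the central idempotents $e_j$, after which every claim becomes a routine factor-by-factor reduction. The only points needing a little care are verifying that semiprimeness of $I$ forces semiprimeness of each summand $I_j$ (which again uses that the $e_j$ are central) and bookkeeping the degenerate case $I_j = \A_j$, so that $\mathcal{M}_j = \emptyset$ and $\A$ itself is permitted as the empty intersection of maximal left ideals; neither causes any real difficulty.
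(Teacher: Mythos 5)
Your proposal is correct and follows essentially the same route as the paper: decompose every left ideal of $\A$ along the central idempotents, characterize maximal left ideals coordinatewise, check that semiprimeness passes to each summand $I_j$ (by the same $a\A a = a e_j \A e_j a$ computation), and then invoke the strong left Jacobson property of each $\A_j$. The only cosmetic differences are that you argue maximality via simplicity of the quotient module and exhibit $I$ directly as the intersection $\bigcap_j \bigcap_{M\in\mathcal{M}_j}\hat M$, whereas the paper argues with the ideals themselves and separates a given element $\hat c \notin I$ by a single maximal left ideal; both are sound.
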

\begin{proof}
  Write $p_j \coloneqq 0 \oplus \dots \oplus 0 \oplus 1 \oplus 0 \oplus \dots \oplus 0 \in \A_j \subseteq \A$
  for $j\in \{1,\dots,k\}$,
  where $1$ denotes the unit of $\A_j$ and is situated at position $j$. Clearly $\sum_{j=1}^k p_j$ is the unit of $\A$.
  Every left ideal $I$ of $\A$ decomposes as
  \begin{equation*}
    I= I_1 \oplus \dots \oplus I_k = \hat{I}_1 \cap \dots \cap \hat{I}_k
    ,
  \end{equation*}
  where, for each $j \in \{ 1,\ldots,k \}$, $I_j \coloneqq p_j I = I \cap \A_j$ is a left ideal of $A_j$ and $\hat I_j$ is a left ideal of $\A$.
  In particular if $I = \maxLId$ is a maximal left ideal of $\A$, then by maximality of $\maxLId$, $\hat\maxLId_j = \A$
  for all but one $j\in \{1,\dots,k\}$, so $\maxLId = \hat\maxLId_j$ for this remaining $j\in \{1,\dots,k\}$.
  Conversely, if $\naxLId_j$ is any maximal left ideal of $\A_j$
  for some $j\in \{1,\dots,k\}$, then it is clear that $\hat\naxLId_j$ is a maximal left ideal of $\A$.

  Let $I$ be a semiprime left ideal of $\A$. For any $j\in \{1,\dots,k\}$, the left ideal $I_j$ of $\A_j$ is also semiprime:
  indeed, if $a\in \A_j$ fulfils $a \A_j a \subseteq I_j$, then $a \A a = a p_j \A p_j a \subseteq I_j \subseteq I$
  because $p_j \A p_j \subseteq \A_j$, so $a \in I$ because $I$ is semiprime, and therefore $a \in I_j$.
  Now consider any element $\hat c = c_1 \oplus \dots \oplus c_k \in \A \setminus I$ with components $c_j \in \A_j$ for $j\in \{1,\dots,k\}$.
  Then there exists $j\in \{1,\dots,k\}$ such that $c_j \in \A_j \setminus I_j$, and if $\A_j$ is strongly left Jacobson,
  then there is a maximal left ideal $\maxLId_j$ of $\A_j$ such that $I_j \subseteq \maxLId_j$ and $c_j \notin \maxLId_j$.
  In this case $\hat\maxLId_j$ is a maximal left ideal of $\A$ that fulfils $I \subseteq \hat\maxLId_j$ and $\hat c \notin \hat\maxLId_j$.
\end{proof}

\begin{corollary} \label{corollary:directsum}
  Let $\FF$ be a field, $\A$ a finite-dimensional semisimple $\FF$-algebra, and $n\in \NN$.
  Then $\A[x_1,\dots,x_n]$ is strongly left Jacobson.
  Moreover, every maximal left ideal of $\A[x_1,\dots,x_n]$ has finite codimension over $\FF$.  
\end{corollary}
\begin{proof}
  As $\A$ is semisimple there are $k\in \NN$ and simple $\FF$-algebras $\A_1, \dots, \A_k$ such that
  $\A \cong \A_1 \oplus \dots \oplus \A_k$. As $\otimes$ is distributive over $\oplus$,
  $\A[x_1,\dots,x_n] \cong \A_1[x_1,\dots,x_n] \oplus \dots \oplus \A_k[x_1,\dots,x_m]$.
  By Proposition~\ref{proposition:simple} each $\FF$-algebra $\A_j[x_1,\dots,x_n]$ with $j\in \{1,\dots,k\}$
  is strongly left Jacobson and every maximal left ideal $\maxLId_j$ of $\A_j[x_1,\dots,x_n]$ has finite codimension.
Proposition~\ref{proposition:directsum} now shows that $\A[x_1,\dots,x_n]$ itself is strongly left Jacobson
  and that every maximal left ideal of $\A[x_1,\dots,x_n]$ is of the form $\hat\maxLId_j$ as in \eqref{eq:directsum:hat}
  for some $j\in \{1,\dots,m\}$ and some maximal left ideal $\maxLId_j$ of $\A_j[x_1,\dots,x_n]$.
  Clearly the codimension   of $\hat\maxLId_j$ in $\A[x_1,\dots,x_n]$ equals the codimension 
  of $\maxLId_j$ in $\A_j[x_1,\dots,x_n]$, which is finite.
\end{proof}

We need only one more technical Lemma before we can proof our main theorem:

\begin{lemma} \label{lemma:radcorr}
  Let $\A$ be a finite-dimensional algebra over some field $\FF$ and $\B \coloneqq \A/\rad \A$
  with $\rad \A:=\rad(\{0\})$ the Jacobson radical of  $\A$.
% as in Section~\ref{sec2}.
  Write $\tilde{\pi} \colon \A \to \B$ for the canonical projection and define
  \begin{equation}
    \pi \coloneqq \tilde\pi \otimes \id_{\FF[x_1,\dots,x_n]} \colon \A[x_1,\dots,x_n] \to \B[x_1,\dots,x_n]
    .
  \end{equation}
  Then every semiprime left ideal of $\A[x_1,\ldots,x_n]$ contains $\ker \pi$.
\end{lemma}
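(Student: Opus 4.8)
The plan is to describe $\ker\pi$ explicitly and then to observe that it is a nilpotent two-sided ideal, which every semiprime left ideal is forced to contain. Since $\pi=\tilde\pi\otimes\id_{\FF[x_1,\dots,x_n]}$ acts on $\A[x_1,\dots,x_n]=\A\otimes_\FF\FF[x_1,\dots,x_n]$ by applying $\tilde\pi$ to the coefficients of a polynomial, we have $\ker\pi=(\rad\A)[x_1,\dots,x_n]$, the set of polynomials all of whose coefficients lie in $\ker\tilde\pi=\rad\A$; since $\rad\A$ is a two-sided ideal of $\A$, this $\ker\pi$ is a two-sided ideal of $\A[x_1,\dots,x_n]$. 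As $\A$ is finite-dimensional, hence (left) Artinian, its Jacobson radical is nilpotent, say $(\rad\A)^m=\{0\}$ for some $m\in\NN$ (this is standard; see e.g.\ \cite[Thm.~4.12]{lam}), and a direct computation then gives $(\ker\pi)^m\subseteq(\rad\A)^m[x_1,\dots,x_n]=\{0\}$. Thus $\ker\pi$ is a nilpotent two-sided ideal of $\A[x_1,\dots,x_n]$.

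It therefore suffices to prove the general claim that a nilpotent two-sided ideal $J$ of any ring $R$ is contained in every semiprime left ideal $I$ of $R$; the Lemma then follows by taking $R=\A[x_1,\dots,x_n]$ and $J=\ker\pi$. I would prove this by downward induction on $k\ge 1$, showing $J^k\subseteq I$, where $J^m=\{0\}$. The base case $k=m$ is immediate. For the inductive step, suppose $J^{k+1}\subseteq I$ with $k\ge 1$ and take $a\in J^k$. Since $J^k$ is in particular a left ideal, $Ra\subseteq J^k$, hence
\[
  aRa\subseteq aJ^k\subseteq J^k J^k=J^{2k}\subseteq J^{k+1}\subseteq I,
\]
where we used $2k\ge k+1$. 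As $I$ is semiprime, this forces $a\in I$, so $J^k\subseteq I$; taking $k=1$ yields $J\subseteq I$.

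I do not expect a genuine obstacle here. Once $\ker\pi$ is recognised as the coefficientwise radical $(\rad\A)[x_1,\dots,x_n]$ (and hence nilpotent, using finite-dimensionality of $\A$), the only point that requires any care is to run the induction over the powers $J^k$ rather than over $J$ directly, so that the inequality $2k\ge k+1$ makes the semiprimeness hypothesis applicable; everything else is routine bookkeeping.
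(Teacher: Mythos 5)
Your proof is correct, and it takes a genuinely different route from the paper's. You isolate the general fact that a nilpotent two-sided ideal $J$ of a ring $R$ lies in every semiprime left ideal $I$ (the downward induction on $J^k$, with the inequality $2k \ge k+1$ making $aRa \subseteq J^{2k} \subseteq J^{k+1} \subseteq I$ applicable), and then apply it directly to $\ker\pi = (\rad\A)[x_1,\dots,x_n]$, which is nilpotent because $\rad\A$ is nilpotent by \cite[Thm.~4.12]{lam}. The paper instead works through the two-sided ideal quotient $J \coloneqq \bigl[I : \A[x_1,\dots,x_n]\bigr]$ from \eqref{eq:quotient}: it shows that $J$ is a semiprime two-sided ideal, that $J \cap \A$ is a semiprime ideal of $\A$, concludes $\rad\A \subseteq J \cap \A$ from nilpotency, and then deduces $\ker\pi \subseteq J \subseteq I$ coefficientwise. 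Both arguments hinge on the same nilpotency input, but yours replaces the reduction to a two-sided semiprime ideal of $\A$ (where the classical ``semiprime contains every nilpotent ideal'' fact applies) by a direct left-ideal analogue of that fact, proved in the polynomial ring itself; this is arguably more self-contained and yields a reusable general lemma, whereas the paper's route stays within the ideal-quotient formalism it uses elsewhere (e.g.\ in Theorem~\ref{theorem:algebraValuedPolynomials} and Proposition~\ref{proposition:Jxiv}). Your identification of $\ker\pi$ with the coefficientwise radical and the estimate $(\ker\pi)^m \subseteq (\rad\A)^m[x_1,\dots,x_n] = \{0\}$ are both correct, so there is no gap.
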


\begin{proof}
  Consider a semiprime left ideal $I$ of $\A[x_1,\dots,x_n]$ and let $J \coloneqq \bigl[I \colon \A[x_1,\dots,x_n]\bigr] =
  \{ a \in \A[x_1,\dots,x_n] \mid a\A[x_1,\dots,x_n] \subseteq I\}$
  be the (two-sided) ideal of $\A[x_1,\dots,x_n]$ like in \eqref{eq:quotient}.
  Then $J$ is again semiprime: Indeed, if $a\in \A[x_1,\dots,x_n]$ fulfils $a c a \in J$ for all $c\in\A[x_1,\dots,x_n]$,
  then this means that $acad \in I$ for all $c,d\in\A[x_1,\dots,x_n]$, and therefore also $adcad \in I$ for all $c,d\in\A[x_1,\dots,x_n]$.
  But this implies $ad \in I$ for all $d\in\A[x_1,\dots,x_n]$ because $I$ is semiprime, so $a \in J$.

  Next we show that $J \cap \A$ is a semiprime (two-sided) ideal of $\A$:
  It is clear that $J \cap \A$ is a two-sided ideal of $\A$.
  Let $d\in \NN$ be the dimension of $\A$ over $\FF$ and let $b_1,\dots,b_d$ be an $\FF$-basis of $\A$.
  If $a\in \A$ fulfils $a c a \in J \cap \A$ for all $c\in \A$, then also $a (\sum_{i=1}^d p_i b_i) a = \sum_{i=1}^d p_i ab_ia \in J$
  for all elements $\sum_{i=1}^d p_i b_i \in \A[x_1,\dots,x_n]$ with coefficients $p_1,\dots,p_d \in \FF[x_1,\dots,x_n]$,
  so $a\in J$ because $J$ is a semiprime ideal of $\A[x_1,\dots,x_n]$.

  By \cite[Thm.~4.12]{lam}, $\rad \A$ is a nilpotent ideal; i.e.\ $(\rad\A)^n=\{0\} \subseteq  J \cap \A$ for some $n$.
  Since  $J \cap \A$ is semiprime, it follows that $\rad\A  \subseteq J \cap \A$.
  Using standard multiindex notation, any element $a \in \A[x_1,\ldots,x_n]$ can be expanded as a finite sum $a=\sum_\nu a_\nu x^\nu$
  with suitable coefficients $a_\nu \in \A$.
  If $a \in \ker \pi$, then $0=\pi(a)=\sum_\nu \tilde\pi(a_\nu) x^\nu$, so $\tilde\pi(a_\nu)=0$ for all multiindices $\nu$.
  Therefore $a_\nu \in \rad\A \subseteq  J\cap \A$ for all multiindices $\nu$, hence $a \in J$.
  This shows that $\ker \pi \subseteq J \subseteq I$.
\end{proof}

\begin{theorem} \label{theorem:algebraValuedPolynomials}
  Let $\A$ be a finite-dimensional algebra over some field $\FF$ and $n\in \NN$.
  Then $\A[x_1,\dots,x_n]$ is strongly left Jacobson.
  Moreover, every maximal left ideal of $\A[x_1,\dots,x_n]$ has finite codimension over $\FF$.  
\end{theorem}
\begin{proof}
  Recall that if we factor a finite-dimensional algebra with its Jacobson radical
  then we get a semisimple algebra; see \cite[Thm.~4.14]{lam}.
  Corollary~\ref{corollary:directsum} therefore applies to the algebra $\B[x_1,\dots,x_n]$
  where $\B=\A/\rad \A$.
  So $\B[x_1,\dots,x_n]$ is strongly left Jacobson and that every maximal left ideal  of $\B[x_1,\dots,x_n]$   has finite codimension over $\FF$.

  As the kernel of the surjective algebra morphism $\pi \colon \A[x_1,\dots,x_n] \to \B[x_1,\dots,x_n]$
  from Lemma~\ref{lemma:radcorr} is contained in every semiprime left ideal of $\A[x_1,\ldots,x_n]$,
  Lemma \ref{lemma:homomorphicimage} shows that the image and preimage under $\pi$ give a bijective correspondence between
  the semiprime (resp.\ prime, maximal) left ideals of $\A[x_1,\dots,x_n]$ and semiprime (resp.\ prime, maximal) left ideals of $\B[x_1,\dots,x_n]$.
  It follows that also $\A[x_1,\dots,x_n]$ is strongly left Jacobson and that every maximal left ideal of
  $\A[x_1,\dots,x_n]$ has finite codimension over $\FF$.
\end{proof}

Finally we want to reformulate the above noncommutative Nullstellensatz in a more explicit way. 
So let $\A$ be a finite-dimensional algebra over a field $\FF$ and $n\in \NN$. 
Let $\theta \colon \A \to \Si$ be a surjective $\FF$-algebra morphism onto
a simple $\FF$-algebra $\Si$, then define
\begin{equation}
  \label{eq:Theta}
  \Theta \coloneqq \theta \otimes_\FF \id_{\FF[x_1,\dots,x_n]} \colon \A[x_1,\dots,x_n] \to \Si[x_1,\dots,x_n]
  .
\end{equation}
Write $\EE \coloneqq \zentrum(\Si)$ for the center of $\Si$, then
like in the discussion at the beginning of this section,
$\Si \otimes_\EE \bar \FF \cong \Mat_k(\bar\FF)$ for some $k\in \NN$.
Let $\xi \in \bar \FF^n$ and $v\in \bar \FF^k$, then
\begin{equation}
  \label{eq:Jxiv}
  J_{\theta,\xi,v} \coloneqq \bigl\{ a\in\A[x_1,\dots,x_n] \mid \Theta(a)(\xi) v = 0 \bigr\}
  %=\Theta^{-1}(J_{\xi,v}) 
\end{equation}
is a left ideal of $\A[x_1,\dots,x_n]$ and $1 \in J_{\theta,\xi,v}$ if and only if $v=0$.

\begin{proposition} \label{proposition:Jxiv}
  Let $\A$ be a finite-dimensional algebra over some field $\FF$ and $n\in \NN$.
  Then for every maximal left ideal $\naxLId$ of $\A[x_1,\dots,x_n]$  there exist 
  a surjective $\FF$-algebra morphism   $\theta \colon \A \to \Si$ onto a simple $\FF$-algebra $\Si$, 
  a point $\xi \in \bar \FF^n$,    and a vector $v\in \bar \FF^k$   such that $\naxLId = J_{\theta,\xi,v}$.
\end{proposition}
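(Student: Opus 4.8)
The plan is to chain together the three reduction steps already in place: Lemma~\ref{lemma:radcorr} (passage to the semisimple quotient), Proposition~\ref{proposition:directsum} (decomposition into simple blocks), and Proposition~\ref{proposition:simple} (the simple case), and then to match up the definitions. First I would set $\B_0 \coloneqq \A/\rad\A$ and let $\pi \colon \A[x_1,\dots,x_n] \to \B_0[x_1,\dots,x_n]$ be the morphism of Lemma~\ref{lemma:radcorr}. Since $\maxLId$ is in particular a semiprime left ideal, $\ker\pi \subseteq \maxLId$, so by Lemma~\ref{lemma:homomorphicimage} the image $\tilde\maxLId \coloneqq \pi(\maxLId)$ is a maximal left ideal of $\B_0[x_1,\dots,x_n]$ and $\maxLId = \pi^{-1}(\tilde\maxLId)$.

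Next, $\B_0$ is finite-dimensional semisimple, so $\B_0 \cong \A_1 \oplus \dots \oplus \A_m$ with each $\A_j$ a finite-dimensional simple $\FF$-algebra, and correspondingly $\B_0[x_1,\dots,x_n] \cong \A_1[x_1,\dots,x_n] \oplus \dots \oplus \A_m[x_1,\dots,x_n]$ with coordinate projections $p_j$. By Proposition~\ref{proposition:directsum} there are an index $j\in\{1,\dots,m\}$ and a maximal left ideal $\maxLId_j$ of $\A_j[x_1,\dots,x_n]$ with $\tilde\maxLId = \hat\maxLId_j$ in the notation of \eqref{eq:directsum:hat}. Applying Proposition~\ref{proposition:simple} to $\A_j$ then yields: writing $\EE_j \coloneqq \zentrum(\A_j)$ and $\A_j \otimes_{\EE_j} \bar\FF \cong \Mat_k(\bar\FF)$, there exist $\xi \in \bar\FF^n$ and $v\in\bar\FF^k\setminus\{0\}$ with $\maxLId_j = \{\,b\in \A_j[x_1,\dots,x_n] \mid b(\xi)\,v = 0\,\}$.

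Finally I would assemble the data. Put $\B \coloneqq \A_j$, which is simple; let $\theta \colon \A \to \B$ be the composition of the canonical projection $\A \to \B_0$ with the coordinate projection $\B_0 \to \A_j$, which is a surjective $\FF$-algebra morphism; and take $\xi,v$ as above. With $\Theta \coloneqq \theta \otimes_\FF \id_{\FF[x_1,\dots,x_n]}$ as in \eqref{eq:Theta}, functoriality of $\otimes_\FF \id$ shows that $\Theta = p_j \circ \pi$, and the evaluation-at-$\xi$ map on $\A_j[x_1,\dots,x_n] = \B[x_1,\dots,x_n]$ used in \eqref{eq:Jxiv} coincides with the one from Proposition~\ref{proposition:simple} since $\zentrum(\B) = \EE_j$ and the integer $k$ is the same. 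Hence, for every $a\in\A[x_1,\dots,x_n]$,
\begin{equation*}
  a\in J_{\theta,\xi,v} \iff \Theta(a)(\xi)\,v = 0 \iff p_j\bigl(\pi(a)\bigr) \in \maxLId_j \iff \pi(a) \in \hat\maxLId_j = \tilde\maxLId \iff a \in \maxLId ,
\end{equation*}
so $\maxLId = J_{\theta,\xi,v}$, as required. The only real work is the bookkeeping in this last step: verifying that the successive identifications — killing the radical, splitting $\B_0[x_1,\dots,x_n]$ into the blocks $\A_j[x_1,\dots,x_n]$, and the evaluation morphism of Proposition~\ref{proposition:simple} — are mutually compatible, so that the composite $\Theta$ followed by evaluation at $\xi$ and pairing with $v$ really does detect membership in $\maxLId$. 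There is no new conceptual obstacle beyond what Propositions~\ref{proposition:simple} and~\ref{proposition:directsum} already supply.
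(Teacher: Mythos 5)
Your argument is correct, but it takes a different route from the paper's. You re-run the reduction chain that the paper uses to prove Theorem~\ref{theorem:algebraValuedPolynomials}: kill the radical via Lemma~\ref{lemma:radcorr} together with Lemma~\ref{lemma:homomorphicimage} (using that a maximal left ideal is prime, hence semiprime, so it contains $\ker\pi$), split $\A/\rad\A$ into Wedderburn blocks and invoke Proposition~\ref{proposition:directsum}, and finish with Proposition~\ref{proposition:simple}; the simple quotient $\B$ is then a block of $\A/\rad\A$ and $\theta$ the composite projection, with $\Theta = p_j\circ\pi$ doing the bookkeeping. The paper instead works intrinsically with $\maxLId$: it forms $\maxId \coloneqq \bigl[\maxLId : \A[x_1,\dots,x_n]\bigr]$, quotes Theorem~\ref{theorem:algebraValuedPolynomials} for its maximality, extracts the degree-zero coefficients of a relation $1-\sum_j b_j a c_j \in \maxId$ to show that $\maxId\cap\A$ is a maximal ideal of $\A$, and sets $\B \coloneqq \A/(\maxId\cap\A)$, checking directly that $\ker\Theta\subseteq\maxLId$ before applying Proposition~\ref{proposition:simple}. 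Your version never needs Theorem~\ref{theorem:algebraValuedPolynomials} (nor the maximality of $\maxId$), which makes it slightly more self-contained; the paper's version produces a $\theta$ canonically attached to $\maxLId$ and yields the additional fact that $\maxId\cap\A$ is a maximal ideal of $\A$. The only point to state a bit more carefully in your write-up is that the identification $\B\otimes_{\EE}\bar\FF\cong\Mat_k(\bar\FF)$ entering the definition of $J_{\theta,\xi,v}$ in \eqref{eq:Jxiv} is chosen to be the same one used when applying Proposition~\ref{proposition:simple} to $\A_j[x_1,\dots,x_n]$; since the conclusion is only existential in $\xi$ and $v$, any fixed choice works.
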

\begin{proof}
The $\FF$-algebra $\B=\A/\rad \A$ is semisimple, so it decomposes into a direct sum
of simple $\FF$-algebra $\B_1 \oplus \ldots \oplus \B_k$. Let $\tilde{\pi} \colon \A \to \B$
and $\tilde{\tau}_i \colon  \B \to \B_i$ be the canonical maps. The corresponding maps
on polynomial rings will be denoted by $\pi$ and $\tau_i$ respectively.

By the proof of Theorem  \ref{theorem:algebraValuedPolynomials}, 
every maximal left ideal of $\A[x_1,\dots,x_n]$ is of the form
$\pi^{-1}(\maxLId)$ where $\maxLId$ is a maximal left ideal of $\B[x_1,\dots,x_n]$.
By the proof of Corollary  \ref{corollary:directsum}, there exist $j \in \{1,\ldots,k\}$
and a maximal left ideal  $\maxLId_j$ of $\B_j[x_1,\dots,x_n]$ such that
$\maxLId=\tau_j^{-1}(\maxLId_j)$. 
Write $\Si=\B_j$,  $\Theta = \tau_j \circ \pi$ and $\theta=\tilde{\tau}_j \circ \tilde{\pi}$.
By Proposition \ref{proposition:simple}, $\maxLId_ j= J_{\xi,v}$ for some $\xi$ and $v$.
It follows that $\pi^{-1}(\maxLId)=\Theta^{-1}(\maxLId_j)=\Theta^{-1}(J_{\xi,v})= J_{\theta,\xi,v}$.
\end{proof}

\begin{corollary}
\label{cor:final}
  Let $\A$ be a finite-dimensional algebra over some field $\FF$ and $n\in \NN$.
  Then every semiprime left ideal $I$ of $\A[x_1,\dots,x_n]$ is the intersection of all left ideals of $\A[x_1,\dots,x_n]$
  of the form $J_{\theta,\xi,v}$ as in \eqref{eq:Jxiv} that contain $I$.
\end{corollary}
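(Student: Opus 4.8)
The plan is to deduce this directly from the two principal results of the section, namely that $\A[x_1,\dots,x_n]$ is strongly left Jacobson (Theorem~\ref{theorem:algebraValuedPolynomials}) and that every maximal left ideal of $\A[x_1,\dots,x_n]$ is of the form $J_{\theta,\xi,v}$ for a suitable surjection $\theta$ onto a simple $\FF$-algebra, a point $\xi \in \bar\FF^n$, and a vector $v$ (Proposition~\ref{proposition:Jxiv}).

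First I would note that, since $I$ is semiprime, the smallest semiprime left ideal containing $I$ is $I$ itself, so $\sqrt[st]{I} = I$. Because $\A[x_1,\dots,x_n]$ is strongly left Jacobson, this gives $\rad(I) = \sqrt[st]{I} = I$; in other words, $I$ is exactly the intersection of all maximal left ideals $\maxLId$ of $\A[x_1,\dots,x_n]$ with $I \subseteq \maxLId$. Now let $\mathcal{J}$ be the family of all left ideals of the form $J_{\theta,\xi,v}$ as in \eqref{eq:Jxiv} that contain $I$. Every member of $\mathcal{J}$ contains $I$, hence $I \subseteq \bigcap \mathcal{J}$. Conversely, by Proposition~\ref{proposition:Jxiv} each maximal left ideal of $\A[x_1,\dots,x_n]$ containing $I$ belongs to $\mathcal{J}$, so $\bigcap \mathcal{J}$ is contained in the intersection of all maximal left ideals containing $I$, which by the preceding step equals $I$. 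Combining the two inclusions yields $I = \bigcap \mathcal{J}$, which is the assertion.

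As the argument merely combines already-proved facts, I do not foresee any genuine difficulty; the only point worth a remark is the degenerate case $I = \A[x_1,\dots,x_n]$, which causes no trouble since $J_{\theta,\xi,0} = \A[x_1,\dots,x_n]$ itself lies in $\mathcal{J}$ (and, in any case, an empty intersection is understood to be the whole ring).
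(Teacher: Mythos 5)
Your argument is correct and is essentially the paper's own proof: both deduce from Theorem~\ref{theorem:algebraValuedPolynomials} that $I$ is the intersection of the maximal left ideals containing it, invoke Proposition~\ref{proposition:Jxiv} to see that each of these is of the form $J_{\theta,\xi,v}$, and then sandwich $I$ between the two intersections. Your explicit remark about the degenerate case $I=\A[x_1,\dots,x_n]$ is a harmless extra precision left implicit in the paper.
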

\begin{proof}
  Let $I$ be a semiprime left ideal of $\A[x_1,\dots,x_n]$.
  By Theorem~\ref{theorem:algebraValuedPolynomials} and Proposition~\ref{proposition:Jxiv}, $I$ is the intersection of
  \emph{some} left ideals of the form $J_{\theta,\xi,v}$ as in \eqref{eq:Jxiv},
  and therefore $I$ is the intersection of
  \emph{all} left ideals of the form $J_{\theta,\xi,v}$ as in \eqref{eq:Jxiv} that contain $I$.
\end{proof}

\bibliographystyle{plainurl}

\normalfont\footnotesize

\end{document}